\newtheorem{theorem}{Theorem}[section]
\newtheorem{lemma}[theorem]{Lemma}
\newtheorem{proposition}[theorem]{Proposition}
\newtheorem{corollary}[theorem]{Corollary}
\theoremstyle{remark}
\newtheorem{remark}[theorem]{Remark}
\def\gp#1{\langle \hspace*{.35mm} #1  \rangle}
\newcommand{\F}{\mathbb{F}}
\newcommand{\Z}{\mathbb{Z}}
\newcommand{\Q}{\mathbb{Q}}
\newcommand{\C}{\mathbb{C}}
\renewcommand{\P}{\mathbb{P}}
\newcommand{\GL}{\mathrm{GL}}
\newcommand{\rk}{\mathrm{rk}}
\newcommand{\hi}{\mathrm{h}}
\newcommand{\UT}{\mathrm{UT}}
\begin{document}

\title{Algorithms for linear groups of finite rank}

\author{A.~S. Detinko, D.~L. Flannery, and E.~A. O'Brien}

%\date{\today}

\keywords{linear group, solvable group, algorithm, Pr\"{u}fer rank}

\footnotetext{{\sl 2010 Mathematics Subject Classification}: 
20-04, 20F16, 20H20.}
%20-04, 20G15, 20H25, 68W30.}

\begin{abstract}
Let $G$ be a finitely generated solvable-by-finite linear group.
We present an algorithm to compute the torsion-free rank of $G$
and a bound on the Pr\"{u}fer rank of $G$. This yields in turn an
algorithm to decide whether a finitely generated subgroup of $G$
has finite index. The algorithms are implemented in {\sc Magma}
for groups over algebraic number fields.
\end{abstract}

\maketitle

In \cite{Tits, Recog} we developed practical methods for computing
with linear groups over an infinite field $\F$. Those methods were
used to test whether a finitely generated subgroup of $\GL(n,\F)$
is solvable-by-finite (SF). We now proceed to the design of
further algorithms for finitely generated SF linear groups. Such a
group may not be finitely presentable (see \cite[4.22,
\mbox{p.}~66]{Wehrfritz}), so obviously cannot be studied using
approaches that %aim for
 require a presentation; in contrast to, say,
polycyclic-by-finite (PF) groups. Extra restrictions are necessary
to make computing feasible. Groups of finite rank are suitable
candidates from this point of view, because they are well-behaved
algorithmically \cite[Section~9.3]{LenRob}.
%That's for minimax groups. f.g. here?
They also have convenient structural features (see
\cite[Section~5.2]{LenRob} and Section~\ref{Prelim}).

%However, some of what we say remains valid over an arbitrary field
%$\F$.

In this paper we develop initial results to enable computing with
finitely generated linear groups of finite rank. Since such groups
are $\Q$-linear (Proposition~\ref{MainlyKopytov}), our primary
focus is the case that $\F$ is an algebraic number field. We first
test whether $G\leq \GL(n, \F$) has finite rank. If so, we compute
its torsion-free rank and an upper bound on its Pr\"{u}fer rank.
This furnishes an algorithm to decide whether a finitely generated
subgroup of $G$ has finite index. We determine various asymptotic
bounds of interest in their own right.
%
%the Hirsch number and bounding Pr\"{u}fer rank.
%
%(for example, a bound on Pr\"{u}fer rank gives a bound on the
%minimal number of generators required for any finitely generated
%subgroup).
%%Yes, this is just the definition of Prufer rank.
%
Algorithms for the structural investigation of $G$ are provided as
well: these construct a completely reducible part, and a finitely
generated subgroup with the same rank as the unipotent radical.
Our algorithms have been implemented in {\sc Magma} \cite{Magma}.
We emphasize that computations are performed with a given group in
its original representation, avoiding enlargement of matrices to
get an isomorphic copy over $\Q$.
%
%MOTIVATE *why* we might want to compute rank, Pr\"{u}fer rank in
%particular. Drop the f.g. requirement: classes in
%\cite[p.86]{LenRob} separate.
%
%First step; foundation for further research in the area.

%contain contain contain calculate calculate calculate interest
%interest interest follow follows following
%part part part
%Lots of brackets.

Naturally, it is possible to take advantage of additional
properties of $G$ when they are known. If $G$ is %(abstract)
polycyclic then one could obtain its torsion-free rank from a
consistent polycyclic presentation of $G$,
 %for $\P = \Q$
the latter found as in \cite{BettinaBjorn}. An even more tractable
class is nilpotent-by-finite groups (\mbox{cf.}
\cite[Section~7]{Dixon85}).
%This paper is a first attempt to compute effectively with general
%SF linear groups in general.
%
%...for SF groups that are not PF, decidability of many problems is
%unknown; or practical algorithms are not available. Emphasize
%pioneering aspects of this paper.
%
%Robinson writes:
%>As far as I know it is still not known how to compute precisely
%>the rank of a polycyclic group.
%
%*Abstract* polycyclic group? After all, in Bettina paragraph above
%we say that rank of polycyclic group over Q can be computed
%precisely!
%Robinson surely means Pruefer rank here.

We summarize the layout of the paper. Section~\ref{Prelim} gives
background on linear groups of finite rank, including a reduction
to SF groups over a number field. Section~\ref{SFgroupsoverP} is
an extended treatment of such groups. In Section~\ref{Sect3} we
discuss ranks of finite index
subgroups; we are indebted to D.~J.~S.~Robinson %\cite{RobPerCom}
for a vital theorem here. Section~\ref{Sect3} also shows how to
find the rank of a unipotent normal subgroup. In
Section~\ref{AlgSect} we present our algorithms and some
experimental results.

Unless stated otherwise, $\F$ is an (infinite) field. The rational
field is denoted as usual by $\Q$, and $\P$ is a number field with
ring of integers $\mathcal{O}_\P$.
%We omit proofs that are straightforward.
%Straightforward proofs are omitted; omitted proofs are not necessarily
%straightforward! (some theorems are cited from other sources so
%not proved here).

\section{Preliminaries}\label{Prelim}

A general reference for this section is \cite[Chapter~5]{LenRob}.

\subsection{Pr\"{u}fer rank and torsion-free rank}

Recall that a group $G$ has finite Pr\"{u}fer rank $\rk(G)$ if
each finitely generated subgroup of $G$ can be generated by
$\rk(G)$ elements, and $\rk(G)$ is the least such integer.
\begin{theorem}\label{GroundZero}
Let $G\leq \GL(n,\F)$ have finite Pr\"{u}fer rank. Then $G$ is SF,
and if $\mathrm{char} \, \F >0$ then $G$ is abelian-by-finite
(AF).
\end{theorem}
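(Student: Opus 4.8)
The plan is to deduce both assertions from the Tits alternative together with the structure theory of the Zariski closure. First I would pass to the algebraic closure of $\F$, which changes neither the abstract isomorphism type of $G$ nor its Pr\"{u}fer rank, and form the Zariski closure $H=\overline{G}$. Writing $H^0$ for the identity component and $G_0=G\cap H^0$, the index $[G:G_0]=[H:H^0]$ is finite and $G_0$ is dense in $H^0$; since both ``solvable-by-finite'' and ``abelian-by-finite'' are inherited under finite extensions, it suffices to treat $G_0$. The governing observation is that a nonabelian free group has infinite Pr\"{u}fer rank, so the hypothesis $\rk(G)<\infty$ forces $G$ to contain no copy of $F_2$.

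To prove $G_0$ solvable I would show that $H^0$ has no semisimple part. Suppose instead that the quotient $S=H^0/R(H^0)$ by the solvable radical is a nontrivial connected semisimple group; then the image of $G_0$ in $S$ is Zariski-dense, hence not virtually solvable. The main obstacle is that $G_0$ need not be finitely generated, whereas the form of the Tits alternative available in positive characteristic requires finite generation. I would resolve this by a Noetherian argument: among the Zariski closures in $S$ of images of finitely generated subgroups of $G_0$ there is a maximal one, since the lattice of algebraic subgroups of $S$ satisfies the ascending chain condition, and maximality forces it to equal $S$. Thus some finitely generated $G_1\leq G_0$ already has dense image in $S$; this image is not virtually solvable, hence neither is $G_1$, so by the Tits alternative $G_1$ contains $F_2$, contradicting finite rank. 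Therefore $H^0$, and with it $G_0\leq H^0$, is solvable, which gives the first assertion.

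For the positive-characteristic statement I would exploit that unipotent elements now have bounded order. By Lie--Kolchin the connected solvable group $H^0$ is triangularizable, so after conjugation $G_0$ lies in $\mathrm{D}\ltimes\mathrm{U}$, the semidirect product of the diagonal and unitriangular groups. The intersection $N=G_0\cap\mathrm{U}$ is nilpotent of bounded exponent (a power of $p$) and of finite Pr\"{u}fer rank, hence finite, while $G_0/N$ embeds in the abelian group $\mathrm{D}$; consequently the derived subgroup $G_0'$ is finite. It remains to promote ``finite derived subgroup plus finite rank'' to abelian-by-finite. Passing to the finite-index centralizer $C=C_{G_0}(G_0')$ makes $G_0'$ central, so $C$ is nilpotent of class at most $2$; bilinearity of the commutator then gives $[x^e,y]=[x,y]^e=1$ for all $x,y\in C$, where $e$ is the exponent of $G_0'$, so every $e$-th power is central. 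Hence $C/Z(C)$ is abelian of finite exponent and finite rank, and therefore finite. Thus $Z(C)$ is an abelian subgroup of finite index in $G$, completing the proof.
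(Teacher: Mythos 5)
Your proof has a genuine gap, located exactly where the real content of the theorem lies: the positive-characteristic case. The flawed step is the ``Noetherian argument.'' The lattice of algebraic subgroups of $S$ does \emph{not} satisfy the ascending chain condition: Noetherianity of the underlying variety gives the \emph{descending} chain condition on closed subgroups, whereas ascending chains need not terminate, because every finite subgroup is algebraic and there are strictly increasing infinite chains of finite subgroups --- for instance $S(\mathbb{F}_p) < S(\mathbb{F}_{p^2}) < S(\mathbb{F}_{p^4}) < \cdots$ when $S$ is defined over $\mathbb{F}_p$, or the torsion subgroups of a maximal torus. So a maximal element among the closures of images of finitely generated subgroups of $G_0$ need not exist; it fails to exist precisely when every finitely generated subgroup of $G_0$ has finite image in $S$, i.e.\ when the image of $G_0$ is locally finite --- which is exactly the situation your argument would need to rule out.

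Worse, in characteristic $p$ this gap cannot be repaired by any argument that, like yours, uses the finite-rank hypothesis only through ``$G$ contains no $F_2$.'' The implication ``linear with no free subgroup of rank $2$ $\Rightarrow$ solvable-by-finite'' is false for infinitely generated groups in characteristic $p$: the group $\mathrm{PGL}(2,\overline{\mathbb{F}_p})$ is linear of degree $3$ (adjoint representation), locally finite --- hence contains no $F_2$ --- and is not virtually solvable. It is consistent with the theorem only because its Pr\"ufer rank is infinite (its unipotent subgroups are elementary abelian of unbounded rank), so finite rank must enter the solvability proof in an essential way, e.g.\ to control increasing unions of finite subgroups. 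In characteristic $0$ your reduction to finitely generated subgroups is unnecessary anyway, since Tits proved his alternative there for arbitrary linear groups, so that half of your argument can be salvaged by citing the stronger form of the theorem. Your second paragraph (finiteness of $G_0\cap\mathrm{U}$, finiteness of $G_0'$, the centralizer $C$, and finiteness of $C/Z(C)$ from bounded exponent plus finite rank --- the one place finite rank is used beyond excluding $F_2$, and correctly so) is sound, but it presupposes solvability of $H^0$, which in characteristic $p$ is precisely what remains unproven. Note that the paper itself gives no argument, citing Wehrfritz [10.9, p.~141] instead, so a self-contained proof would be a genuinely different route --- but this one does not close.
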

\begin{proof}
See \cite[10.9, \mbox{p.}~141]{Wehrfritz}.
\end{proof}
\begin{corollary}\label{FGCredFRisAF}
Let $G$ be a finitely generated subgroup of $\GL(n,\F)$. If $G$ is
AF then it has finite Pr\"{u}fer rank; if $G$ is completely
reducible and has finite Pr\"{u}fer rank then it is AF.
\end{corollary}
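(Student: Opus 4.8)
The statement splits into two implications, of which the second carries the real content; I would dispose of the first quickly. For the implication ``AF $\Rightarrow$ finite Pr\"ufer rank'', let $A\trianglelefteq G$ be abelian with $[G:A]<\infty$. Since $G$ is finitely generated and $A$ has finite index, $A$ is finitely generated (Reidemeister--Schreier), hence $A\cong\Z^{r}\oplus T$ with $T$ finite, and such a group plainly has finite Pr\"ufer rank. It then remains to observe that finite Pr\"ufer rank passes from a finite-index subgroup to the whole group: for any finitely generated $H\leq G$ the intersection $H\cap A$ has index at most $[G:A]$ in $H$ and rank at most $\rk(A)$, so $H$ is generated by a bounded number of elements. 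Equivalently, a finitely generated AF group is polycyclic, and polycyclic groups have finite rank.

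For the implication ``completely reducible and finite Pr\"ufer rank $\Rightarrow$ AF'', the finite-rank hypothesis enters only through Theorem~\ref{GroundZero}. If $\mathrm{char}\,\F>0$ the conclusion is immediate from that theorem. So the real content is the case $\mathrm{char}\,\F=0$, where Theorem~\ref{GroundZero} yields only that $G$ is SF, and complete reducibility must be exploited to rule out unipotent phenomena; the discrete Heisenberg group shows that finite rank alone is far from sufficient, since it is finitely generated, nilpotent of finite rank, yet not AF.

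Here is the plan for $\mathrm{char}\,\F=0$. First I would pass to the algebraic closure $\bar\F$; since $\mathrm{char}\,\F=0$ the natural module stays semisimple, so $G\leq\GL(n,\bar\F)$ remains completely reducible. Choose a solvable normal subgroup $N$ of finite index (available because $G$ is SF). By Clifford's theorem the restriction of the natural module to $N$ is again semisimple. Let $\overline{N}$ be the Zariski closure of $N$ and $\overline{N}^{\,\circ}$ its identity component: as $\overline{N}$ is a solvable algebraic group, $\overline{N}^{\,\circ}$ is connected solvable, hence triangularizable by the Lie--Kolchin theorem. Put $N_{0}=N\cap\overline{N}^{\,\circ}$, a subgroup of finite index in $N$. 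The key point is that $N_{0}$ is triangularizable while its natural module is semisimple (again by Clifford, since $N_{0}\trianglelefteq N$); such a group must be diagonalizable, because triangularizability forces every composition factor to be one-dimensional, and a semisimple module all of whose composition factors are one-dimensional is a direct sum of one-dimensional submodules. Hence $N_{0}$ is abelian. Finally, since $G$ normalizes $N$ it normalizes $\overline{N}$ and therefore its identity component $\overline{N}^{\,\circ}$, so $N_{0}\trianglelefteq G$; as $[G:N_{0}]=[G:N]\,[N:N_{0}]<\infty$, the group $G$ is abelian-by-finite.

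I expect the main obstacle to be the synthesis in the last paragraph rather than any single hard lemma. One must check that complete reducibility survives the reduction to $\bar\F$ (this is exactly where $\mathrm{char}\,\F=0$ is used), and that the abelian subgroup $N_{0}$ produced is genuinely normal of finite index in $G$ and not merely in $N$. Once the dichotomy ``triangularizable $+$ semisimple $\Rightarrow$ diagonalizable'' is in place, the conclusion is forced; the finite-rank hypothesis does no further work beyond delivering solvable-by-finiteness through Theorem~\ref{GroundZero}.
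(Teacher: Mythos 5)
Your proposal is correct, and its second half takes a genuinely different route from the paper. For the first implication your argument coincides with the paper's: a finite-index abelian subgroup $A$ of a finitely generated group is finitely generated, and finite rank passes to extensions of finite-rank groups by finite groups. (One aside is off: a finitely generated AF group need not be \emph{polycyclic} --- take a finite non-solvable group such as $A_5$ --- only polycyclic-by-finite; but this does not affect your main argument.) For the second implication, the paper simply invokes Theorem~\ref{GroundZero} together with a citation of Wehrfritz [3.5\,(ii), p.~44], which is Mal'cev's theorem that a completely reducible solvable-by-finite linear group is abelian-by-finite. You instead reprove that cited result from scratch: pass to the algebraic closure (using $\mathrm{char}\,\F=0$ to preserve semisimplicity of the natural module), take a solvable normal subgroup $N$ of finite index, apply Clifford's theorem twice, and use the Lie--Kolchin theorem on the identity component of the Zariski closure of $N$ to produce a triangularizable finite-index subgroup $N_0\trianglelefteq G$ whose module is semisimple, hence diagonalizable, hence abelian. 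Your synthesis is sound --- in particular $N_0=N\cap\overline{N}^{\,\circ}$ is indeed normal in $G$ and of finite index, and ``triangularizable with semisimple module implies diagonalizable'' is correct. What the paper's approach buys is brevity (a one-line citation); what yours buys is a self-contained argument that makes visible exactly where complete reducibility and $\mathrm{char}\,\F=0$ are used --- in effect you have reconstructed the standard proof of the result the paper cites.
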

\begin{proof}
If $G$ is AF then it has a normal finitely generated abelian
subgroup $A$ of finite index. Since $A$ and $G/A$ have finite
rank, so too does $G$. On the other hand, if $G$ is completely
reducible and has finite rank, then it is AF by
Theorem~\ref{GroundZero} and \cite[3.5 (ii),
\mbox{p.}~44]{Wehrfritz}.
\end{proof}
\begin{remark}
The converse of Theorem~\ref{GroundZero} is not true even when $G$
is finitely generated. However, see Proposition~\ref{FgSFoverP}.
\end{remark}

\begin{proposition}\label{MainlyKopytov}
If $G$ is a finitely generated subgroup of $\GL(n,\F)$ of finite
Pr\"{u}fer rank then $G$ is $\Q$-linear, i.e., isomorphic to a
subgroup of $\GL(d,\Q)$ for some $d$.
\end{proposition}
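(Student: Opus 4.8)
The plan is to first replace $\F$ by a finitely generated subfield and then treat the two characteristics separately, in each case reducing to a situation where a concrete embedding is available. Since $G$ is finitely generated, the entries of a finite generating set lie in a finitely generated subfield $\K$ of $\F$, so $G\leq \GL(n,\K)$. By Theorem~\ref{GroundZero}, $G$ is SF, and is AF when $\mathrm{char}\,\F>0$. It suffices to embed $G$ in $\GL(m,\P)$ for some number field $\P$: restriction of scalars then presents $\P^m$ as a $\Q$-vector space of dimension $m[\P:\Q]$ on which $G$ acts faithfully and $\Q$-linearly, giving the desired $d=m[\P:\Q]$ (and $\P=\Q$ handles transcendence degree zero directly).

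When $\mathrm{char}\,\F=p>0$, Theorem~\ref{GroundZero} makes $G$ abelian-by-finite. A normal abelian subgroup $A$ of finite index is itself finitely generated, being of finite index in the finitely generated group $G$; hence $A$ is polycyclic and $G$ is polycyclic-by-finite. By the Auslander--Swan theorem every polycyclic-by-finite group embeds in $\GL(d,\Z)\leq \GL(d,\Q)$, so $G$ is $\Q$-linear. This disposes of positive characteristic completely.

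The remaining, and principal, case is $\mathrm{char}\,\F=0$, where $\K$ is a finitely generated extension of $\Q$ of some transcendence degree $t$. Here I would first reduce to $G$ soluble. Choose a soluble normal subgroup $N$ of finite index; it is finitely generated and of finite rank, so a faithful $\Q$-representation of $N$ (the soluble case, applied to $N$) induces up to a $\Q$-representation $\sigma$ of $G$ whose kernel $K$ meets $N$ trivially and is therefore finite. Since finitely generated linear groups are residually finite (Mal'cev), $K$ is separated by a finite quotient $G/M$ with $K\cap M=1$, and the diagonal map into $\sigma(G)\times G/M$ embeds $G$ in a product of two $\Q$-linear groups. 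Thus it is enough to prove the claim for $G$ soluble. For soluble $G$ the identity component of its Zariski closure is triangularisable (Lie--Kolchin), so a finite-index subgroup of $G$ lies, after conjugation over $\overline{\K}$, in the upper-triangular group: its diagonal characters generate a finitely generated subgroup of $\overline{\K}^{\ast}$ of finite torsion-free rank, and its unipotent part is torsion-free nilpotent of finite rank.

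The main obstacle is the final specialisation of the transcendentals $x_1,\dots,x_t$ to algebraic values so that $G$ lands over a number field $\P$ without collapsing any nontrivial element. Injectivity of such a reduction map is an infinite family of conditions, one per nontrivial group element, so the argument hinges on using finiteness of rank to cut these down to finitely many avoidable ones; concretely, a finitely generated soluble group of finite rank is minimax (Robinson), which bounds the number of algebraically independent parameters among the diagonal and unipotent data above. This reduction of transcendence degree to zero for finite-rank soluble linear groups in characteristic zero is due to Kopytov, and I would invoke it (or the corresponding statement in Wehrfritz) to finish the soluble case, then assemble the pieces via the reductions of the first two paragraphs.
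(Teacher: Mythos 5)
Your proposal is correct in its essentials and, at its core, rests on the same two external results as the paper: in positive characteristic both arguments run Theorem~\ref{GroundZero} $\Rightarrow$ abelian-by-finite $\Rightarrow$ polycyclic-by-finite $\Rightarrow$ $\Z$-linear (Auslander--Swan, cited in the paper as \cite[3.3.1, p.~57]{LenRob}), and in characteristic zero both ultimately outsource the hard specialisation-of-transcendentals step to Kopytov \cite{Kopytov}. Where you genuinely differ is the shape of the characteristic-zero reduction. The paper does no reduction to the soluble case at all: it quotes \cite[4.8, p.~56]{Wehrfritz} to see that $G$ is (torsion-free)-by-finite, combines this with Theorem~\ref{GroundZero} to conclude that $G$ has a torsion-free soluble normal subgroup of finite index and finite rank, and observes that this is exactly the hypothesis of \cite[Theorem~2]{Kopytov}, which covers such by-finite extensions directly. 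You instead prove the by-finite step by hand: a faithful $\Q$-representation of a soluble finite-index normal subgroup $N$, induced up to $G$, has kernel meeting $N$ trivially, hence finite, and that finite kernel is killed by a diagonal embedding into $\sigma(G)\times G/M$ using Mal'cev residual finiteness. That argument is valid, and it buys something: it shows the general statement follows formally from the soluble case, independent of how strong a version of Kopytov's theorem one has available. But it is superfluous given the actual scope of \cite[Theorem~2]{Kopytov}, and it leaves one loose end: your $N$ need not be torsion-free, whereas the form of Kopytov's theorem the paper relies on asks for a \emph{torsion-free} soluble group of finite rank. To close this, intersect $N$ with a torsion-free finite-index subgroup of $G$ (which exists in characteristic zero by \cite[4.8, p.~56]{Wehrfritz}, or Selberg's lemma) before invoking Kopytov; this is precisely the step the paper performs via Wehrfritz. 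With that one-line repair your argument is complete, just longer than it needs to be.
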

\begin{proof}
Suppose that $\mathrm{char} \, \F = 0$. By \cite[4.8,
\mbox{p.}~56]{Wehrfritz}, $G$ is (torsion-free)-by-finite, and by
Theorem~\ref{GroundZero}, $G$ is SF. Thus $G$ contains a
torsion-free solvable normal subgroup of finite index and finite
rank. The result now follows from \cite[Theorem 2]{Kopytov}.
%Or 5.1.8 of LenRob (no requirement that the group be f.g.) and
%2.3 p.20 of Wehrfritz! Note however that Kopytov is earlier than W.

Suppose that $\mathrm{char} \, \F > 0$. By
Theorem~\ref{GroundZero}, $G$ is PF.
%Move definition of acronym PF outside of proof, or write out
%'polycyclic-by-finite'.
It is well-known that a PF group is $\Z$-linear; see \cite[3.3.1,
\mbox{p.}~57]{LenRob}.
\end{proof}

Theorem~\ref{GroundZero} and Proposition~\ref{MainlyKopytov}
essentially reduce the investigation of finitely generated linear
groups of finite rank to the case of SF groups over $\Q$. In
Section~\ref{SFSubgpsGLnP} we show conversely that finitely
generated SF subgroups of $\GL(n,\P)$ always have finite rank.
Hence we restrict attention mainly to groups over number fields.

Now recall that a group $G$ has finite torsion-free rank if it has
a subnormal series of finite length whose factors are either
periodic or infinite cyclic. The number $\hi(G)$ of infinite
cyclic factors is the {\em Hirsch number}, or {\em torsion-free
rank}, of $G$.

\begin{lemma}\label{FinitePMeansFiniteH}
An SF group with finite Pr\"{u}fer rank has finite torsion-free
rank.
\end{lemma}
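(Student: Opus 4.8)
The plan is to peel $G$ down to abelian sections and bound the torsion-free rank of each such section by the Prüfer rank, which controls everything uniformly.

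First I would reduce to the solvable case. Since $G$ is SF, it contains a normal solvable subgroup $S$ of finite index: if $S_0$ is a solvable subgroup of finite index, its normal core is an intersection of finitely many conjugates, hence still of finite index, and is solvable as a subgroup of $S_0$. The quotient $G/S$ is finite, hence periodic, so it suffices to produce for $S$ a subnormal series of finite length whose factors are periodic or infinite cyclic; appending the single factor $G/S$ then yields such a series for $G$ and shows $\hi(G)=\hi(S)$.

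Next I would reduce to the abelian case through the derived series. As $S$ is solvable, its derived series $S=S^{(0)}\ge S^{(1)}\ge\cdots\ge S^{(d)}=1$ has finite length $d$ with abelian factors $A_i=S^{(i)}/S^{(i+1)}$. Because finite Prüfer rank is inherited by subgroups and by quotients (a finitely generated subgroup of a quotient lifts to a finitely generated subgroup of $S$, generated by at most $\rk(G)$ elements whose images still generate), each $A_i$ satisfies $\rk(A_i)\le\rk(G)$. If every abelian $A_i$ admits a finite subnormal series with periodic or infinite cyclic factors, I can splice these along the derived series to obtain one for $S$, with $\hi(S)=\sum_i\hi(A_i)$.

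The decisive step is therefore the abelian case: let $A$ be abelian with $\rk(A)=r<\infty$. Its torsion subgroup $T$ is periodic, and the torsion-free quotient $A/T$ has finite rank $s\le r$, since otherwise $A$ would contain $r+1$ independent elements generating a copy of $\Z^{r+1}$, which cannot be generated by $r$ elements. Choosing a maximal independent set in $A/T$ produces a free abelian subgroup $F\cong\Z^{s}$ with periodic quotient $(A/T)/F$; pulling $F$ back to $A$ and inserting the standard chain of $F$ gives a finite subnormal series of $A$ with periodic or infinite cyclic factors and exactly $s$ infinite cyclic factors, so $\hi(A)=s\le r$. Tracing back through the two reductions yields a finite series for $G$ with periodic or infinite cyclic factors, and in fact $\hi(G)\le d\cdot\rk(G)<\infty$. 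I expect this abelian step to be the main point: the substance is that finite Prüfer rank forces finite torsion-free rank even for non-finitely-generated abelian groups, and one must genuinely exhibit the series, since a rank-one torsion-free factor need not be cyclic (for instance a subgroup of $\Q$)—this is precisely what passing to a free abelian subgroup of full rank with periodic quotient circumvents.
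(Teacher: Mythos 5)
Your proof is correct, and every step that needs justification is in fact justified: passing to the normal core to get a solvable normal subgroup of finite index, the observation that finite Pr\"{u}fer rank is inherited by quotients (with the correct lifting argument), and the abelian case, where you rightly identify the real content --- a torsion-free abelian group of finite Pr\"{u}fer rank need not be free or even finitely generated, so one must manufacture the series via a maximal independent set $F\cong\Z^{s}$ with $(A/T)/F$ periodic rather than hope the factors are already cyclic. The comparison with the paper is somewhat one-sided: the paper offers no argument at all, its ``proof'' being a citation to Lennox--Robinson (p.~85). So your write-up supplies exactly what the authors outsource, and it follows what is essentially the standard textbook route for this fact (derived series plus the abelian case); it is also slightly more general than the paper needs, since nothing in it uses linearity, matching the abstract-group formulation of the lemma. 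The only cosmetic slip is the phrase ``$A$ would contain $r+1$ independent elements'' where you mean $A/T$ (or, equivalently, preimages in $A$ of independent elements, which remain independent since any relation among them maps to a relation modulo $T$); either reading repairs itself immediately from your earlier observation that $\rk(A/T)\leq\rk(A)$. As a bonus, your argument yields the quantitative statement $\hi(G)\leq d\cdot\rk(G)$ with $d$ the derived length, which the citation does not make explicit.
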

\begin{proof}
See \cite[\mbox{p.}~85]{LenRob}.
\end{proof}

\begin{lemma}\label{RkHFormulae}
Let $G$ be a group with normal subgroup $N$.
\begin{itemize}
\item[{\rm (i)}] If $G$ has finite Pr\"{u}fer rank then
$\rk(G)\leq\rk(N) + \rk(G/N)$. \item[{\rm (ii)}] If $G$ has finite
torsion-free rank then $\hi(G) = \hi(N) + \hi(G/N)$.
\end{itemize}
\end{lemma}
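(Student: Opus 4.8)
The plan is to treat the two parts separately, each working directly from the definitions.

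For (i), I would first observe that both $\rk(N)$ and $\rk(G/N)$ are finite, since finite Pr\"ufer rank passes to subgroups and to quotients (a finitely generated subgroup of $G/N$ is the image of a finitely generated subgroup of $G$, so $\rk(G/N) \le \rk(G)$; likewise $\rk(N) \le \rk(G)$). Write $r = \rk(N)$ and $s = \rk(G/N)$. By definition of Pr\"ufer rank it then suffices to show that an arbitrary finitely generated subgroup $H = \langle x_1, \ldots, x_m\rangle \le G$ can be generated by at most $r+s$ elements. The image $HN/N$ is finitely generated, hence generated by some $s' \le s$ elements; lifting these to $y_1, \ldots, y_{s'} \in H$ and setting $Y = \langle y_1, \ldots, y_{s'}\rangle$, each $x_j$ satisfies $x_j N = u_j N$ for some $u_j \in Y$, so $x_j = u_j n_j$ with $n_j = u_j^{-1} x_j \in H \cap N$.

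The key step is to set $M_0 = \langle n_1, \ldots, n_m \rangle$, a \emph{finitely generated} subgroup of $N$. Since $x_j = u_j n_j \in \langle Y, M_0\rangle$ for every $j$, we obtain $H = \langle Y, M_0\rangle$. As $M_0$ is a finitely generated subgroup of $N$, it is generated by at most $r$ elements, and $Y$ by at most $s$ elements; hence $H$ is generated by at most $r + s$ elements, as required. I would flag this as the point needing care: the naive approach of replacing $M_0$ by $H \cap N$ itself fails, because $H \cap N$ need not be finitely generated. Passing instead to the finitely generated subgroup $M_0$, which together with $Y$ still generates $H$, is exactly what circumvents this obstacle and lets the rank bound on $N$ apply.

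For (ii), I would exploit additivity of the Hirsch number across a subnormal series. First note that $N$ and $G/N$ inherit finite torsion-free rank from $G$ (intersect, respectively project, a suitable subnormal series), so $\hi(N)$ and $\hi(G/N)$ are defined. Choose a subnormal series from $1$ to $N$ with periodic or infinite cyclic factors, exactly $\hi(N)$ of the latter, and a subnormal series from $1$ to $G/N$ with $\hi(G/N)$ infinite cyclic factors; pulling the second back through the quotient map yields a subnormal series from $N$ to $G$ with isomorphic factors. Concatenating produces a subnormal series from $1$ to $G$ whose total number of infinite cyclic factors is $\hi(N) + \hi(G/N)$. The main obstacle here is that this count must be independent of the chosen series: this is the Jordan--H\"older/Schreier-refinement invariance of the Hirsch number, implicit in the well-definedness of $\hi(G)$ asserted in the definition, and from which $\hi(G) = \hi(N) + \hi(G/N)$ follows immediately.
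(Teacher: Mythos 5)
Your proposal is correct, but note that the paper offers no proof of this lemma at all: it is stated as standard background (the general reference for that section being Lennox--Robinson, Chapter~5), so there is nothing in the paper to compare your argument against line by line. What you have written is the standard textbook proof, and it is sound. In part (i) you correctly identify and handle the one genuine pitfall: one cannot bound the generators of $H$ by applying $\rk(N)$ to $H\cap N$, since $H\cap N$ need not be finitely generated; replacing it by the finitely generated subgroup $M_0=\langle n_1,\ldots,n_m\rangle$ with $H=\langle Y,M_0\rangle$ is exactly the right fix, and your lifting of generators of $HN/N\cong H/(H\cap N)$ to elements of $H$ is legitimate. In part (ii) the concatenation of a series for $N$ with the pullback of a series for $G/N$ is correct (images and preimages of subnormal series under the quotient map remain subnormal with factors that are quotients, respectively isomorphic copies, of the original ones), and the reduction of the equality to the well-definedness of $\hi$ --- i.e., to the Schreier-refinement argument showing that any two admissible series have the same number of infinite cyclic factors --- is a fair appeal, since that invariance is already presupposed by the paper's definition of the Hirsch number. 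In short: you have supplied, correctly, a proof that the authors chose to omit.
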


%\begin{remark}
%We mention the following instance of the theorem in \cite{LinWar}.
%Let $d(H)$ denotes the minimal size of a generating set for a
%finitely generated group $H$.
%%%Obviously d(H) \leq rk(H). So why might one want to be able to
%%%calculate rk? This (rather obvious & immediate) connection to
%%%the mingen problem.
%If $G$ is SF then $d(G) \leq 1+ \mathrm{max}\bigl( \{ d(G/N) \mid
%N \unlhd G, \, |G:N| < \infty \} \bigr)$.
%\end{remark}

\subsection{Polyrational groups}\label{PolyratSect}

Let $U(G)$ be the unipotent radical of $G\leq \GL(n,\F)$; namely,
the largest unipotent normal subgroup of $G$. Note that $G/U(G)$
is isomorphic to a completely reducible subgroup of $\GL(n,\F)$.
If we exhibit $G$ in block triangular form with completely
reducible blocks, then $U(G)$ is the kernel of the projection of
$G$ onto its main diagonal. Denote the largest periodic normal
subgroup of $G$ by $\tau(G)$.
%(see Section~\ref{CRedCase}).
\begin{lemma}\label{TauGFinite}
Let $G$ be a finitely generated subgroup of $\GL(n,\F)$ of finite
Pr\"{u}fer rank. Then $\tau(G)$ is finite.
\end{lemma}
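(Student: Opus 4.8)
The plan is to reduce everything to one elementary observation: \emph{in a finitely generated AF group $H$, every periodic subgroup is finite}. To establish this, I would take an abelian normal subgroup $A$ of finite index in $H$; since $H$ is finitely generated and $[H:A]<\infty$, the subgroup $A$ is itself finitely generated, so its torsion subgroup is finite. Then for any periodic $P\leq H$, the intersection $P\cap A$ is a periodic subgroup of a finitely generated abelian group and hence finite, while $P/(P\cap A)$ embeds in the finite group $H/A$; thus $P$ is finite. The task is therefore to produce, from $G$, a finitely generated AF group into which $\tau(G)$ injects.

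Granting the observation, I would split according to $\mathrm{char}\,\F$. If $\mathrm{char}\,\F>0$, then $G$ is AF by Theorem~\ref{GroundZero}, and since $G$ is finitely generated the observation applies at once to the periodic normal subgroup $\tau(G)$, giving finiteness directly.

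If $\mathrm{char}\,\F=0$, I would first dispose of the unipotent part. A nontrivial unipotent matrix over a field of characteristic $0$ has infinite order (via the $\exp$/$\log$ correspondence for unipotent elements), so $U(G)$ is torsion-free; consequently $\tau(G)\cap U(G)=1$ and $\tau(G)$ maps isomorphically onto its image $\overline{\tau(G)}$ in $\overline{G}=G/U(G)$. Now $\overline{G}$ is a finitely generated completely reducible linear group whose Pr\"{u}fer rank is bounded by $\rk(G)$, hence AF by Corollary~\ref{FGCredFRisAF}. Applying the observation to the periodic subgroup $\overline{\tau(G)}$ of $\overline{G}$ shows it is finite, and therefore so is $\tau(G)$. (One could instead invoke Proposition~\ref{MainlyKopytov} to pass to a copy of $G$ over $\Q$ and run only the characteristic-$0$ argument, since $\tau$ is an isomorphism invariant.)

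I expect the characteristic-$0$ case to be the main obstacle. Finite Pr\"{u}fer rank alone does not bound the order of a periodic group---a quasicyclic Pr\"{u}fer group has rank one yet is infinite---so finiteness must be wrung out of the linear structure rather than from the rank hypothesis. The decisive point is that passing to $G/U(G)$ simultaneously preserves $\tau(G)$ intact (because $U(G)$ is torsion-free in characteristic $0$) and lands in a completely reducible group, where finite Pr\"{u}fer rank \emph{does} force the abelian-by-finite property required by the opening observation.
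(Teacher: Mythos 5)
Your proof is correct and follows essentially the same route as the paper: in characteristic $0$ you pass to $G/U(G)$ (using that $U(G)$ is torsion-free so $\tau(G)$ embeds), invoke Corollary~\ref{FGCredFRisAF} to get a finitely generated AF group, and finish with the elementary fact that periodic subgroups of finitely generated AF groups are finite --- exactly the paper's argument. The only cosmetic difference is that you dispose of positive characteristic directly via Theorem~\ref{GroundZero} (AF there), whereas the paper reduces to characteristic $0$ through Proposition~\ref{MainlyKopytov}; both are fine.
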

\begin{proof}
Theorem~\ref{GroundZero} and Proposition~\ref{MainlyKopytov} imply
that $G$ is SF and we may assume that $\mathrm{char} \, \F = 0$.
Then $\tau(G)$ is isomorphic to a subgroup of $\tau(G/U(G))$, and
$G/U(G)$ is finitely generated AF by Corollary~\ref{FGCredFRisAF}.
So we may further assume that $G$ has a normal abelian subgroup
$A$ of finite index. Since $A$ is finitely generated, $\tau(G)\cap
A \leq \tau(A)$ is finite. Thus $|\tau(G)| = |\tau(G)A:A|\cdot
|\tau(G)\cap A|$ is finite.
\end{proof}

A group is {\em polyrational} if it has a series of finite length
with each factor isomorphic to a subgroup of the additive group
$\Q^+$. So a polyrational group has finite torsion-free and
Pr\"{u}fer ranks. %Moreover, the ranks are identical.
\begin{proposition}\label{RkeqHPolyrat}
If $G$ is polyrational then $\rk(G) = \hi(G)$.
\end{proposition}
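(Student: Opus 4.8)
The plan is to combine the two formulae in Lemma~\ref{RkHFormulae} with the elementary observation that the claimed identity already holds for the factors of a defining series. First I would fix a series $1 = G_0 \triangleleft G_1 \triangleleft \cdots \triangleleft G_k = G$ whose factors $F_i = G_i/G_{i-1}$ are isomorphic to subgroups of $\Q^+$, discarding any trivial factors so that each $F_i$ is a nonzero subgroup of $\Q^+$. Such an $F_i$ is torsion-free of rank $1$, so $\hi(F_i) = 1$; and every finitely generated subgroup of $F_i$ is cyclic, so $\rk(F_i) = 1$ as well. Thus $\rk(F_i) = \hi(F_i)$ for every $i$.

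Next I would assemble these factor identities using Lemma~\ref{RkHFormulae}. Iterating part~(ii) along the series gives $\hi(G) = \sum_{i=1}^{k} \hi(F_i)$, while iterating the subadditivity in part~(i) gives the one-sided bound $\rk(G) \le \sum_{i=1}^{k} \rk(F_i)$. Since $\rk(F_i) = \hi(F_i)$, these combine to $\rk(G) \le \hi(G)$. It remains to supply the reverse inequality $\hi(G) \le \rk(G)$; granting it, the chain $\hi(G) \le \rk(G) \le \sum_i \rk(F_i) = \sum_i \hi(F_i) = \hi(G)$ collapses, forcing $\rk(G) = \hi(G)$.

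The reverse inequality $\hi(G) \le \rk(G)$ is the part I expect to be the real obstacle, since subadditivity of $\rk$ only ever yields upper bounds on the Pr\"{u}fer rank, whereas here a lower bound is needed. It holds for any group of finite rank, so the cleanest option is to quote it from \cite[Chapter~5]{LenRob}. To argue it directly I would first reduce to the finitely generated case: choosing elements of $G$ that map onto generators of the $\hi(G)$ infinite cyclic factors in a series of the type defining the Hirsch number produces a finitely generated $H \le G$ with $\hi(H) = \hi(G)$, and $\rk(G) \ge \rk(H)$, so it suffices to bound $\hi(H)$ by $\rk(H)$. The delicate point is that $\hi(H)$ can strictly exceed the minimal number of generators of $H$ itself (as for the discrete Heisenberg group), so the required rank must be extracted from a suitably chosen subgroup of $H$ whose abelianization carries enough torsion-free and torsion rank to force $\hi(H)$ generators. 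Controlling this gain uniformly along an induction is where the technical work lies, which is precisely why invoking the general finite-rank inequality is preferable here.
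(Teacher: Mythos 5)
The paper's own proof of Proposition~\ref{RkeqHPolyrat} is a bare citation of \cite[5.2.7, p.~93]{LenRob}, so your attempt, which actually derives half of the statement from the paper's own toolkit, takes a genuinely different and more self-contained route. Your proof of $\rk(G)\le\hi(G)$ is correct: after discarding trivial factors, each $F_i$ is a nonzero subgroup of $\Q^+$, hence $\hi(F_i)=\rk(F_i)=1$ (finitely generated subgroups of $\Q^+$ are cyclic), and iterating Lemma~\ref{RkHFormulae}(ii) and (i) along the series gives $\hi(G)=\sum_i\hi(F_i)$ and $\rk(G)\le\sum_i\rk(F_i)=\hi(G)$; the iteration of (i) is legitimate because every term of the series is itself polyrational, hence of finite Pr\"ufer rank, as the paper notes just before the proposition. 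Your reduction of the reverse inequality to a finitely generated subgroup $H$ with $\hi(H)=\hi(G)$ is also sound, and you correctly diagnose that producing a subgroup that genuinely needs $\hi(G)$ generators is the hard part. The caveat is that what you propose to ``quote from \cite[Chapter~5]{LenRob}'' is, for polyrational groups, precisely the result 5.2.7 that the paper cites for the whole proposition, so your proof ultimately rests on the same external source; moreover your formulation of the borrowed fact (``it holds for any group of finite rank'') is too loose, since it presupposes finite torsion-free rank and is only safely available in the soluble (FAR/minimax) setting treated in \cite{LenRob} --- which does suffice here, polyrational groups being soluble and torsion-free. One further warning: the inequality $\hi(G)\le\rk(G)$ appearing in the paper (Theorem~\ref{FGFinitePrRkiffPolyratBF}) is itself deduced from Proposition~\ref{RkeqHPolyrat}, so it cannot be invoked in its place without circularity. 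In sum, your route buys a transparent proof of one inequality and a clear isolation of the only nontrivial one; the paper's route buys brevity and a precise pointer covering both.
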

\begin{proof}
See \cite[5.2.7, \mbox{p.}~93]{LenRob}.
\end{proof}
\begin{theorem}\label{FGFinitePrRkiffPolyratBF}
A finitely generated subgroup $G$ of  $\hspace*{.1mm} \GL(n,\F)$
has finite Pr\"{u}fer rank if and only if it is
polyrational-by-finite. In this case, $\hi(G) \leq \rk(G)$.
\end{theorem}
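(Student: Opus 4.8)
The plan is to prove the two implications separately and then extract the inequality $\hi(G)\le\rk(G)$ from the polyrational structure produced in the hard direction. The easier direction is that a polyrational-by-finite group has finite Pr\"ufer rank. If $N\trianglelefteq G$ is polyrational of finite index, then $N$ has finite Pr\"ufer rank (as already noted, every polyrational group does) and $G/N$ is finite; since finite-index subgroups of finitely generated groups are finitely generated, a direct generator count on a finitely generated $H\le G$ (applied to the finitely generated subgroup $H\cap N\le N$ and the finite quotient $H/(H\cap N)$) bounds the number of generators of $H$ uniformly, so $\rk(G)<\infty$, with the explicit estimate $\rk(G)\le\rk(N)+\rk(G/N)$ then coming from Lemma~\ref{RkHFormulae}(i). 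This direction needs no linearity.

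For the converse, suppose $G$ has finite Pr\"ufer rank. Since polyrationality-by-finiteness and finiteness of Pr\"ufer rank are invariants of the abstract isomorphism type, Proposition~\ref{MainlyKopytov} lets me assume $G\le\GL(d,\Q)$, so that $\mathrm{char}\,\F=0$. By Theorem~\ref{GroundZero} the group $G$ is SF, and by \cite[4.8, \mbox{p.}~56]{Wehrfritz} it is (torsion-free)-by-finite; intersecting a torsion-free normal subgroup of finite index with a soluble one and passing to the normal core, I fix a normal subgroup $S_0\trianglelefteq G$ of finite index that is finitely generated, torsion-free, soluble, and of finite rank. Because polyrational groups are closed under subgroups, and the normal core in $G$ of a finite-index polyrational subgroup is again polyrational (being a subgroup) while remaining of finite index and normal, it suffices to produce a single polyrational subgroup of finite index in $S_0$.

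Here I would exploit the linear structure. By Mal'cev's triangularizability theorem \cite{Wehrfritz}, $S_0$ has a normal subgroup $H_0$ of finite index that is triangularizable over $\overline{\Q}$; let $D\colon H_0\to(\overline{\Q}^{\,\ast})^d$ be the diagonal projection. Its kernel $U=U(H_0)$ is unipotent, hence torsion-free nilpotent of finite rank and therefore polyrational (it embeds in a radicable torsion-free nilpotent group whose central factors are rational vector spaces; see \cite[Chapter~5]{LenRob}). The image $D(H_0)$ is a finitely generated abelian group (as $H_0$ is finitely generated), so its torsion subgroup is finite and it contains a free abelian subgroup $\Lambda$ of finite index. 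Setting $H_1=D^{-1}(\Lambda)$, the extension $1\to U\to H_1\to\Lambda\to1$ exhibits $H_1$ as polyrational, by concatenating a polyrational series of $U$ with the lift of one for the free abelian $\Lambda$. As $H_1$ has finite index in $G$, its normal core is the required polyrational normal subgroup of finite index, so $G$ is polyrational-by-finite.

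Finally, given a polyrational normal subgroup $N$ of finite index, Lemma~\ref{RkHFormulae}(ii) gives $\hi(G)=\hi(N)$ since $\hi(G/N)=0$, Proposition~\ref{RkeqHPolyrat} gives $\rk(N)=\hi(N)$, and monotonicity of Pr\"ufer rank under subgroups gives $\rk(N)\le\rk(G)$; chaining these yields $\hi(G)=\rk(N)\le\rk(G)$. The main obstacle is the converse direction, and within it the genuinely delicate point is that an abelian section such as $D(H_0)$ can carry torsion even though $H_0$ is torsion-free, so a naive series (for instance the derived series) will in general have a periodic, non-polyrational factor; the device that overcomes this is the finite generation of $D(H_0)$, which confines all such torsion to a finite group that is then removed by passing to $H_1$ and taking a normal core.
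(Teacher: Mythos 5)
Your proof is correct, but the forward (hard) direction takes a genuinely different route from the paper's. The paper disposes of that direction by a citation chain: Theorem~\ref{GroundZero} (finite rank implies SF), Lemma~\ref{FinitePMeansFiniteH} (finite Hirsch number), Lemma~\ref{TauGFinite} (finiteness of $\tau(G)$), and then the structure theorem \cite[5.2.5, \mbox{p.}~92]{LenRob} for soluble groups of finite rank, which delivers polyrational-by-finiteness as a black box. You instead reduce to $G\leq \GL(d,\Q)$ via Proposition~\ref{MainlyKopytov} (as the paper implicitly does inside Lemma~\ref{TauGFinite}) and then argue directly from the linear structure: Mal'cev triangularization over $\overline{\Q}$ of a torsion-free soluble normal subgroup of finite index, the resulting unipotent-by-abelian extension $1\to U\to H_0\to D(H_0)\to 1$, finite generation of the abelian image to split off its torsion, and a concatenated polyrational series for $H_1=D^{-1}(\Lambda)$. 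This is essentially the same device the paper uses in Proposition~\ref{FgSFoverP} for number fields (there the UA finite-index subgroup comes from a congruence homomorphism rather than from Mal'cev's theorem), so your argument amounts to a self-contained re-derivation of the special case of \cite[5.2.5]{LenRob} that is needed, bypassing Lemmas~\ref{FinitePMeansFiniteH} and \ref{TauGFinite} entirely; the trade-off is that the paper's proof is shorter and rests on general soluble-FAR theory, while yours is longer but makes the polyrational subgroup explicit (unipotent-by-(free abelian)) and correctly isolates the one delicate point, namely that the diagonal image of a torsion-free group can carry torsion. Your easy direction and the deduction $\hi(G)=\hi(N)=\rk(N)\leq\rk(G)$ coincide with the paper's.
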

\begin{proof}
The first statement follows from Theorem~\ref{GroundZero},
Lemmas~\ref{FinitePMeansFiniteH} and \ref{TauGFinite}, and
\cite[5.2.5, \mbox{p.}~92]{LenRob}. For the second, let $N$ be a
normal polyrational finite index subgroup of $G$; then $\hi(G) =
\hi(N)= \allowbreak \rk(N) \leq \rk(G)$.
\end{proof}
Henceforth, the term `rank' without a qualifier means Pr\"{u}fer
or torsion-free rank.

\section{Solvable-by-finite groups over a number field}
\label{SFgroupsoverP}

We now focus on finitely generated SF subgroups of $\GL(n,\P)$.
Set $|\P:\Q|=m$. In this section we obtain more detailed
information about these groups that will be used in our
algorithms.
%Additionally, we provide some upper bounds on ranks.

%where where where

A finitely generated subgroup $G$ of $\GL(n,\F)$ is contained in
$\GL(n,R)$ where $R\subseteq \F$ is a finitely generated integral
domain. The quotient ring $R/\rho$ is a finite field for any
maximal ideal $\rho$ of $R$. We explain in \cite[Section~2]{Tits}
how to construct a congruence homomorphism $\varphi_\rho:
\allowbreak \GL(n,R) \rightarrow \allowbreak \GL(n,R/\rho)$ for a
maximal ideal $\rho$ such that
\begin{itemize}
\item the kernel $G_\rho$ of $\varphi_\rho$ on $G$ is
unipotent-by-abelian (UA) if $G$ is SF; \item $G_\rho$ is
torsion-free if $\mathrm{char} \, \F = 0$.
%(regardless of whether or not $G$ is SF;
%see \cite[Remark 2.3]{Tits}.
\end{itemize}
To be more explicit, let $\F =\P = \Q(\alpha)$ where $\alpha$ has
minimal polynomial $f(X)$, and let $G=\langle \mathcal{S}\rangle$.
Then $\varphi_\rho$ on $R\cap \Q$ is reduction modulo an odd prime
$p\in \Z$ not dividing the discriminant of $f(X)$ nor the
denominators of entries in elements of $\mathcal{S}\cup
\mathcal{S}^{-1}$. Hence $\varphi_\rho$ maps $R$ into the finite
field $\Z_p(\beta)$, where $\beta$ is a root of the mod $p$
reduction of $f(X)$. We adhere to this notation from \cite{Tits}.

\subsection{Unipotent groups}\label{UnipotGpsSect}

Denote the group $\UT(n,K)$ of upper unitriangular matrices over a
commutative unital ring $K$ by $T$. Define $T_i$ to be the
subgroup of $T$ consisting of all matrices with their first $i-1$
superdiagonals equal to zero. Then $T=T_1 > T_2 > \cdots >
T_{n}=1$ is the lower (and upper) central series of $T$. The
homomorphism on $T_i$ that maps each element to its $i$th
superdiagonal has kernel $T_{i+1}$ and image the %additive group
$(n-i)$-fold direct sum $K^+ \! \oplus \cdots \oplus K^+$.
\begin{lemma}\label{GinUTQ}
If $G \leq \UT(n,\Q)$ then
\begin{itemize}
\item[{\rm (i)}] $G$ is polyrational, \item[{\rm (ii)}] $\rk(G) =
\hi(G) \leq n(n-1)/2$.
\end{itemize}
\end{lemma}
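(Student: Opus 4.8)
The plan is to use the central series $T = T_1 > T_2 > \cdots > T_n = 1$ of $T = \UT(n,\Q)$ described just above the lemma, and to cut it down with $G$. Setting $G_i := G \cap T_i$, I obtain a series $G = G_1 \geq G_2 \geq \cdots \geq G_n = 1$; since each $T_{i+1}$ is normal in $T_i$, each $G_{i+1}$ is normal in $G_i$. The key observation is that the $i$th superdiagonal homomorphism, which has kernel $T_{i+1}$ and image $(\Q^+)^{n-i}$, restricts to an injection of $G_i/G_{i+1}$ into $T_i/T_{i+1} \cong (\Q^+)^{n-i}$. Thus every factor $G_i/G_{i+1}$ is isomorphic to a subgroup of a finite direct sum of copies of $\Q^+$; in particular it is torsion-free abelian of rank at most $n-i$.

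For part (i) I would refine the series $\{G_i\}$ inside each layer. A subgroup $H$ of $(\Q^+)^{n-i}$ inherits from the coordinate filtration $0 < \Q^+ < (\Q^+)^2 < \cdots < (\Q^+)^{n-i}$ a series whose successive factors embed in $\Q^+$; since $H$ is abelian, this is automatically a normal series. Pulling these refinements back to the segment between $G_{i+1}$ and $G_i$ and concatenating over all $i$ produces a series of $G$ of finite length, each of whose factors is isomorphic to a subgroup of $\Q^+$. This is precisely the definition of polyrational, giving (i).

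For part (ii), polyrationality yields $\rk(G) = \hi(G)$ by Proposition~\ref{RkeqHPolyrat}, so it remains to bound $\hi(G)$. I would apply the additivity of the Hirsch number (Lemma~\ref{RkHFormulae}(ii)) repeatedly along the series $\{G_i\}$, which is legitimate because every $G_i$ has finite torsion-free rank, to get $\hi(G) = \sum_{i=1}^{n-1} \hi(G_i/G_{i+1})$. As noted above each factor is torsion-free abelian of rank at most $n-i$, so $\hi(G_i/G_{i+1}) \leq n-i$, and therefore $\hi(G) \leq \sum_{i=1}^{n-1}(n-i) = n(n-1)/2$.

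I do not expect a serious obstacle here, as the argument is entirely structural and leans on the description of the superdiagonal maps recorded before the lemma. The only points needing genuine care are that those maps are homomorphisms with kernel exactly $T_{i+1}$ (so that the layer embeddings $G_i/G_{i+1} \hookrightarrow (\Q^+)^{n-i}$ are valid), and that subgroups of $(\Q^+)^{n-i}$ are themselves polyrational with torsion-free rank at most $n-i$; both are routine facts about finitely many copies of $\Q^+$.
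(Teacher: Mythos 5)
Your proposal is correct and follows essentially the same route as the paper: intersect $G$ with the central series $T_1 > T_2 > \cdots > T_n = 1$ of $\UT(n,\Q)$, embed each factor $(G\cap T_i)/(G\cap T_{i+1})$ into $T_i/T_{i+1}\cong(\Q^+)^{n-i}$ via the superdiagonal map, and combine Proposition~\ref{RkeqHPolyrat} with the additivity of $\hi$ from Lemma~\ref{RkHFormulae}(ii). The only (harmless) differences are presentational: you refine each layer explicitly to verify polyrationality where the paper calls it clear, and you bound $\hi(G)$ factor-by-factor instead of computing $\hi(T)=n(n-1)/2$ and appealing to monotonicity for subgroups.
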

\begin{proof}
Let $K = \Q$ in the notation introduced just before the lemma.
Since $(G\cap T_i)/(G\cap T_{i+1})$ is isomorphic to a subgroup of
$T_i/T_{i+1}$, (i) is clear. Then $\rk(G) = \hi(G)$ by
Proposition~\ref{RkeqHPolyrat}. Also, by Lemma~\ref{RkHFormulae}
(ii),
\begin{align*}
\hi(T) &= \hi(T_1/T_2) + \hi(T_2/T_3) + \cdots +
\hi(T_{n-1}/T_n)\\
& = {\textstyle \sum}_{i=1}^{n-1} i
 = n(n-1)/2. \qedhere
\end{align*}
\end{proof}

\begin{corollary}\label{GinUTP}
If $G\leq \UT(n,\P)$ then $G$ is polyrational and $\rk(G) = \hi(G)
\leq \allowbreak nm(nm-1)/2$.
\end{corollary}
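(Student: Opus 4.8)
The plan is to reduce to Lemma~\ref{GinUTQ} by passing from $\P$ to $\Q$ via restriction of scalars. Write $m = |\P:\Q|$ and fix a $\Q$-basis of $\P$. Multiplication by $\gamma \in \P$ is a $\Q$-linear endomorphism of $\P$, so with respect to the chosen basis it is represented by a matrix $\mu(\gamma) \in M_m(\Q)$; the resulting map $\mu \colon \P \to M_m(\Q)$ is an injective ring homomorphism with $\mu(0) = 0$ and $\mu(1) = I_m$. Applying $\mu$ entrywise to an $n \times n$ matrix over $\P$, and reading the result as an $nm \times nm$ block matrix over $\Q$, yields an injective group homomorphism $\Phi \colon \GL(n,\P) \to \GL(nm,\Q)$.

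The key step is to check that $\Phi$ carries $\UT(n,\P)$ into $\UT(nm,\Q)$. Let $g = (g_{ij}) \in \UT(n,\P)$; then the $(i,j)$ block of $\Phi(g)$ equals $\mu(g_{ij})$. The diagonal blocks are $\mu(g_{ii}) = \mu(1) = I_m$, which are upper triangular with $1$'s on the diagonal; the blocks with $i > j$ are $\mu(g_{ij}) = \mu(0) = 0$; and each block with $i < j$ lies strictly above the main diagonal, since $i < j$ forces every one of its column indices to exceed every one of its row indices. Hence $\Phi(g)$ is upper unitriangular, as required.

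It now follows that $\Phi(G) \leq \UT(nm,\Q)$, so Lemma~\ref{GinUTQ}, applied with $nm$ in place of $n$, shows that $\Phi(G)$ is polyrational with $\rk(\Phi(G)) = \hi(\Phi(G)) \leq nm(nm-1)/2$. Since $\Phi$ is injective, $G \cong \Phi(G)$, and because being polyrational and the values of Pr\"{u}fer and torsion-free rank are invariants of isomorphism, the same conclusions transfer to $G$.

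The only point demanding care is the verification that $\Phi$ respects unitriangularity, and this hinges on the single fact that $\mu(1) = I_m$ is (upper) triangular. Given that observation, no simultaneous triangularization result such as Kolchin's theorem is needed: although $\Phi(G)$ is certainly unipotent and could be triangularized up to conjugacy, the direct argument places it inside $\UT(nm,\Q)$ on the nose, so the bound from Lemma~\ref{GinUTQ} applies verbatim.
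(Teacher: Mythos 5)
Your proposal is correct and follows the same route the paper intends for this corollary: restriction of scalars via the regular representation $\P \to M_m(\Q)$, giving an embedding of $\UT(n,\P)$ into $\UT(nm,\Q)$ (the paper later cites exactly this fact from Segal in Section~\ref{UTCase}), after which Lemma~\ref{GinUTQ} applies with $nm$ in place of $n$. Your careful verification that the image lands in $\UT(nm,\Q)$ on the nose, rather than merely being unipotent, is precisely the point that makes the deduction immediate.
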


\subsection{Ranks of solvable-by-finite groups over number fields}
\label{SFSubgpsGLnP}

In this section $G$ is a finitely generated subgroup of
$\GL(n,\P)$. We prove that if $G$ is SF then it has finite rank.
Although $\rk(G)$ can be arbitrarily large, the ranks of finitely
generated SF subgroups of $\GL(n,\mathcal{O}_\P)$ are bounded by
functions of $n$ and $m$, which we give below.
%
%Tight? Almost certainly not. Mention that our bounds are just
%first (naive) attempts?
%
\begin{proposition}\label{FgSFoverP}
Suppose that $G$ is SF. Then $G$ is polyrational-by-finite, hence
of finite Pr\"{u}fer rank.
\end{proposition}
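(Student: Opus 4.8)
The plan is to use the congruence machinery of this section to peel off a finite quotient and then show that the kernel is polyrational up to finite index. First I would apply the construction of \cite[Section~2]{Tits}: since $G$ is finitely generated and SF and $\mathrm{char}\,\P = 0$, there is a maximal ideal $\rho$ of a finitely generated domain $R$ with $G \leq \GL(n,R)$ for which the kernel $G_\rho$ of $\varphi_\rho$ on $G$ is torsion-free and UA. As $R/\rho$ is a finite field, $\GL(n,R/\rho)$ is finite, so $G/G_\rho$ is finite and $G_\rho$ is a finitely generated normal subgroup of finite index in $G$. It therefore suffices to exhibit a polyrational subgroup of finite index in $G_\rho$: its normal core in $G$ will then be a finite-index normal subgroup that is again polyrational, because subgroups of polyrational groups are polyrational (intersecting a defining series with the subgroup yields factors inside $\Q^+$). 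By Theorem~\ref{FGFinitePrRkiffPolyratBF} this at once gives the finite Pr\"{u}fer rank asserted.

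To analyse $G_\rho$ I would fix, using the UA property, a unipotent normal subgroup $V \trianglelefteq G_\rho$ with $G_\rho/V$ abelian. Being a unipotent subgroup of $\GL(n,\P)$, $V$ can be conjugated within $\GL(n,\P)$ into $\UT(n,\P)$ by Kolchin's theorem \cite{Wehrfritz}; hence $V$ is polyrational by Corollary~\ref{GinUTP}, this being an isomorphism-invariant property. The quotient $A := G_\rho/V$ is a finitely generated abelian group, so $A \cong \Z^r \oplus F$ with $F$ finite.

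Rather than argue that $A$ is torsion-free, I would simply pass to finite index. Let $B \cong \Z^r$ be the free part of $A$, a torsion-free subgroup of index $|F|$, and let $H$ be its preimage in $G_\rho$, so that $[G_\rho:H] = |F|$ and $H/V \cong \Z^r$. Concatenating a defining series of the polyrational group $V$ with a series of $\Z^r$ having infinite cyclic factors produces a subnormal series of $H$ each of whose factors embeds in $\Q^+$; thus $H$ is polyrational, and $[G:H] = [G:G_\rho]\,|F| < \infty$. By the reduction above, $G$ is polyrational-by-finite, as required. The one delicate point is the handling of $V$: to invoke Corollary~\ref{GinUTP} one must triangularise the unipotent part over $\P$ itself, not merely over $\bar{\P}$, and it is precisely this that Kolchin's theorem supplies. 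Everything else --- finiteness of $G/G_\rho$, finite generation of $G_\rho$, and discarding the finite summand $F$ --- is routine.
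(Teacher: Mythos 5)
Your proof is correct and follows essentially the same route as the paper: both select a congruence subgroup $G_\rho$ that is UA, torsion-free, and of finite index, write the finitely generated abelian quotient of $G_\rho$ by a unipotent normal subgroup as (free abelian) $\times$ (finite torsion), and observe that the preimage of the free part is polyrational because the unipotent kernel is conjugate into $\UT(n,\P)$. The only differences are cosmetic: the paper uses the unipotent radical $U(G_\rho)$ where you use an arbitrary unipotent normal subgroup supplied by the UA property, and you spell out the normal-core and Kolchin's-theorem details that the paper leaves implicit.
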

\begin{proof}
Select an ideal $\rho$ such that $G_\rho$ is UA and $G/G_\rho$ is
finite. Let $U$ be the unipotent radical of $G_\rho$; then
$G_\rho/U$ is finitely generated abelian. Write $G_\rho/U = H/U
\times \tau(G_\rho/U)$. Since $H/U$  is a finitely generated free
abelian group and $U$ is conjugate to a subgroup of $\UT(n,\P)$,
$H$ is polyrational. Thus $G_\rho$ has a polyrational normal
subgroup of finite index. Consequently the same is true for $G$.
\end{proof}
\begin{remark}\label{RkHBreakdown}
Retaining the notation in the proof of
Proposition~\ref{FgSFoverP}, $\hi(G) = \hi(G_\rho)$ and $\rk(G)
\leq \allowbreak \rk(G_\rho) + \rk(\varphi_\rho(G))$ by
Lemma~\ref{RkHFormulae}. Furthermore $\rk(G_\rho) \leq \hi(H) +
\rk(\tau(G_\rho/U))$. If we know $x\in \allowbreak \GL(n,\P)$ that
conjugates $G$ to block upper triangular form with completely
reducible diagonal blocks, then we can choose $\rho$ so that the
torsion-free group $G_\rho$ is polyrational, and thus
$\rk(G_\rho)=\allowbreak \hi(G_\rho)$. In particular, $G_\rho$ is
polyrational for any $\rho$ when $G$ is completely reducible.
\end{remark}

Remark~\ref{RkHBreakdown} underpins our algorithm to calculate
ranks.
\begin{corollary}
A finitely generated subgroup of $\GL(n,\F)$ has finite Pr\"{u}fer
rank if and only if it is SF and $\Q$-linear.
\end{corollary}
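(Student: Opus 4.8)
The plan is to prove a biconditional, so I handle the two implications separately, drawing on the machinery already assembled in the excerpt.

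For the forward direction, suppose $G$ is a finitely generated subgroup of $\GL(n,\F)$ of finite Pr\"{u}fer rank. That $G$ is SF is immediate from Theorem~\ref{GroundZero}, and that $G$ is $\Q$-linear is exactly Proposition~\ref{MainlyKopytov}. So this direction is essentially a restatement of results already in hand and requires no real work.

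The reverse direction is where the content lies. Suppose $G$ is SF and $\Q$-linear, so we may assume $G\leq \GL(d,\Q)$ for some $d$. I would then invoke Proposition~\ref{FgSFoverP}: a finitely generated SF subgroup of $\GL(d,\P)$ is polyrational-by-finite, hence of finite Pr\"{u}fer rank. Taking $\P=\Q$ (so $m=1$) specializes this to the rational case and finishes the argument, since polyrational-by-finite implies finite Pr\"{u}fer rank by Theorem~\ref{FGFinitePrRkiffPolyratBF}.

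The one point meriting care, and the main obstacle, is the meaning of ``$\Q$-linear.'' The hypothesis gives a subgroup of $\GL(d,\Q)$ only up to isomorphism, whereas Proposition~\ref{FgSFoverP} is stated for a subgroup actually sitting inside $\GL(n,\P)$. Since finite Pr\"{u}fer rank is an abstract group-theoretic invariant, preserved under isomorphism, transporting the conclusion back along the isomorphism is harmless; I would note this explicitly so the reader sees that the faithful representation over $\Q$ is all that is used. With that observation in place, both directions combine to give the stated equivalence.
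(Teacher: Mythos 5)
Your proof is correct and follows the same route the paper intends: the paper states this corollary without proof precisely because it is immediate from Theorem~\ref{GroundZero} and Proposition~\ref{MainlyKopytov} (forward direction) together with Proposition~\ref{FgSFoverP} applied with $\P=\Q$ (reverse direction). Your explicit remark that Pr\"{u}fer rank, finite generation, and the SF property are isomorphism invariants, so the conclusion transports along the faithful representation over $\Q$, is exactly the right point of care and is consistent with how the paper uses $\Q$-linearity.
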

%\begin{proof}
%Propositions~\ref{MainlyKopytov} and \ref{FgSFoverP}.
%\end{proof}

\begin{proposition}\label{FgFRiffSF}
The following are equivalent.
\begin{itemize}
\item[{\rm (i)}] $G$ is SF. \item[{\rm (ii)}] $G$ has finite
Pr\"{u}fer rank. \item[{\rm (iii)}] $G$ has finite torsion-free
rank.
\end{itemize}
\end{proposition}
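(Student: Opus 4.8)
The plan is to establish the cycle (ii) $\Rightarrow$ (i) $\Rightarrow$ (ii) together with (ii) $\Rightarrow$ (iii) $\Rightarrow$ (i), drawing on the results already assembled and supplying a single external input for the last step. The equivalence of (i) and (ii) is essentially in hand: Theorem~\ref{GroundZero} gives (ii) $\Rightarrow$ (i), since a linear group of finite Pr\"ufer rank is SF, while Proposition~\ref{FgSFoverP} gives (i) $\Rightarrow$ (ii), since a finitely generated SF subgroup of $\GL(n,\P)$ is polyrational-by-finite and hence of finite Pr\"ufer rank. For (ii) $\Rightarrow$ (iii) I would combine these: a group satisfying (ii) is SF by Theorem~\ref{GroundZero}, so Lemma~\ref{FinitePMeansFiniteH} applies and yields finite torsion-free rank. (Alternatively one reads this off from Theorem~\ref{FGFinitePrRkiffPolyratBF}, as a polyrational-by-finite group plainly has finite $\hi$.)

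The substance of the proposition is the remaining implication (iii) $\Rightarrow$ (i), which closes the cycle. Here I would invoke the Tits alternative: a finitely generated linear group over a field of characteristic zero---and $\P$ is such a field---is either SF or contains a nonabelian free subgroup. It therefore suffices to exclude the second possibility under hypothesis (iii). I would first record the routine fact that finite torsion-free rank passes to subgroups with $\hi$ non-increasing, obtained by intersecting a defining subnormal series with the subgroup, so that each new factor embeds into an old periodic or infinite cyclic factor. Now suppose, for contradiction, that $G$ with $\hi(G)<\infty$ contained a nonabelian free subgroup $F$. Then $\hi(F)\leq\hi(G)$ is finite, and likewise $\hi(F_k)\leq\hi(F)$ for the free subgroup $F_k\leq F$ of any finite rank $k$. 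As each $F_k$ then has finite torsion-free rank, Lemma~\ref{RkHFormulae}(ii) applies with $N=[F_k,F_k]$, giving $\hi(F_k)=\hi(N)+\hi(\Z^k)\geq k$. Choosing $k>\hi(G)$ yields a contradiction. Hence $G$ has no nonabelian free subgroup and must be SF.

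The main obstacle is precisely this last step. The mutual finiteness of the two ranks for groups over $\P$ is already packaged in the earlier propositions, but passing from a torsion-free-rank hypothesis to solvability-by-finiteness genuinely requires the dichotomy supplied by the Tits alternative, since finite torsion-free rank does not imply finite Pr\"ufer rank for arbitrary (non-linear) groups. Once $G$ is known to be SF, the equivalences feed back through (i) $\Rightarrow$ (ii) to recover finite Pr\"ufer rank, and the three conditions are seen to coincide.
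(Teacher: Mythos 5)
Your proposal is correct and takes essentially the same route as the paper: (i) $\Leftrightarrow$ (ii) via Theorem~\ref{GroundZero} and Proposition~\ref{FgSFoverP}, the passage to finite torsion-free rank via Lemma~\ref{FinitePMeansFiniteH}, and the Tits alternative to recover (i) from (iii). The only difference is that you spell out the detail the paper leaves implicit---that a nonabelian free subgroup cannot have finite torsion-free rank, shown via Lemma~\ref{RkHFormulae}(ii) applied to free subgroups of arbitrarily large rank---and that verification is sound.
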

\begin{proof}
Theorem~\ref{GroundZero} and Proposition~\ref{FgSFoverP} give (i)
$\Leftrightarrow$ (ii). Then (i) $\Leftrightarrow$ (iii) by
Lemma~\ref{FinitePMeansFiniteH} and the Tits alternative.
\end{proof}
\begin{remark}
Thus, we can test whether $G$ has finite rank using the algorithm
of \cite[Section~3.2]{Tits}, which decides the Tits alternative
for $G$. This algorithm accepts a finitely generated linear group
over any $\F$; if it returns $\tt false$, then the input does not
have finite rank.
\end{remark}

In fact, Proposition~\ref{FgSFoverP} holds for a wider class of
groups: what is most important here is that unipotent subgroups of
$\GL(n,\P)$ have finite rank.
%
%Check.
%
\begin{lemma}\label{FGTooStong}
If $R$ is a finitely generated subring of $\, \P$ then an SF
subgroup $H$ of $\GL(n,R)$ has finite Pr\"{u}fer rank.
\end{lemma}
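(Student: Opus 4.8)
The plan is to reduce the statement for an arbitrary finitely generated subring $R\subseteq\P$ to the case already handled in Proposition~\ref{FgSFoverP}, where $G$ is a finitely generated subgroup of $\GL(n,\P)$. The key observation is that finite generation of $H$ as a group is automatic from the hypothesis: although $R$ itself may not be finitely generated as a group, the subgroup $H$ need not be finitely generated either, so I must be careful. In fact I would not assume $H$ is finitely generated; instead I would exploit that every finitely generated subgroup of $H$ already lies in $\GL(n,\P)$ and is SF, so by Proposition~\ref{FgSFoverP} has Pr\"{u}fer rank bounded in terms of $n$ and $m$. The point is to obtain a \emph{uniform} bound independent of which finitely generated subgroup we take.

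First I would recall that, by definition, $\rk(H)$ is finite precisely when there is an integer $r$ such that every finitely generated subgroup of $H$ is generated by $r$ elements. So it suffices to bound $\rk(H_0)$ uniformly over all finitely generated subgroups $H_0\leq H$. Each such $H_0$ is a finitely generated SF subgroup of $\GL(n,\P)$, so Proposition~\ref{FgSFoverP} applies and gives that $H_0$ is polyrational-by-finite of finite Pr\"{u}fer rank. The task then becomes producing a bound on $\rk(H_0)$ that does not depend on $H_0$.

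The main obstacle is precisely this uniformity. The proof of Proposition~\ref{FgSFoverP} passes through a congruence quotient $H_0/(H_0)_\rho$, whose size (and hence the contribution $\rk(\varphi_\rho(H_0))$ in Remark~\ref{RkHBreakdown}) could a priori grow with $H_0$. To control this I would use that $R$ is a finitely generated subring of $\P$, hence integral over a finitely generated subring of $\Q$, and so embeds in $\mathcal{O}_\P$ after inverting finitely many primes; equivalently $H\leq\GL(n,\mathcal{O}_{\P,S})$ for a fixed finite set $S$ of places depending only on $R$. The unipotent radical $U$ of any $H_0$ is conjugate into $\UT(n,\P)$, so by Corollary~\ref{GinUTP} its rank is bounded by $nm(nm-1)/2$, a bound depending only on $n$ and $m$. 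The diagonal (completely reducible) quotient sits inside a product of $\GL$-factors over finite extensions, and its torsion-free rank is governed by the unit groups of the relevant rings of integers; by Dirichlet's unit theorem these have rank bounded by a function of $n$, $m$, and $|S|$, all fixed. Assembling these via Lemma~\ref{RkHFormulae}~(i) yields a single bound $r=r(n,m,R)$ valid for every finitely generated $H_0\leq H$.

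Thus the structure of the argument is: localize $R$ inside $\mathcal{O}_{\P,S}$ to fix the arithmetic data; apply the unipotent bound of Corollary~\ref{GinUTP} to the (conjugated) unipotent radical; bound the rank of the completely reducible part using the unit theorem; and combine these with the subadditivity of Pr\"{u}fer rank from Lemma~\ref{RkHFormulae}~(i) to obtain a uniform bound over all finitely generated subgroups, whence $\rk(H)<\infty$. The step requiring the most care is verifying that the completely reducible diagonal part has rank bounded independently of $H_0$, since this is where the finite generation of the \emph{ring} $R$—rather than of the group $H$—does the essential work.
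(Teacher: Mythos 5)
Your overall skeleton is sound --- reducing to a uniform bound on $\rk(H_0)$ over finitely generated $H_0\leq H$, embedding $R\subseteq\mathcal{O}_{\P,S}$ for a fixed finite set $S$, and bounding $\rk(U(H_0))\leq nm(nm-1)/2$ via Corollary~\ref{GinUTP} are all legitimate, and the Dirichlet $S$-unit argument does uniformly bound the rank of an \emph{abelian} completely reducible subgroup of $\GL(n,\mathcal{O}_{\P,S})$ (eigenvalues are $S'$-units in fields of degree at most $nm$). But there is a genuine gap at exactly the point you yourself flagged as the main obstacle. The quotient $H_0/U(H_0)$ is only abelian-\emph{by-finite}, not abelian: writing $A_0$ for an abelian normal subgroup of finite index in it, Lemma~\ref{RkHFormulae}(i) gives $\rk(H_0/U(H_0))\leq \rk(A_0)+\rk\bigl((H_0/U(H_0))/A_0\bigr)$, and your assembly bounds only the first summand (indeed, you explicitly bound only torsion-free rank data of the diagonal part). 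The second summand --- the Pr\"{u}fer rank of a finite quotient, i.e.\ precisely the contribution $\rk(\varphi_\rho(H_0))$ that you said ``could a priori grow with $H_0$'' --- is never bounded. Merely placing $H$ inside $\GL(n,\mathcal{O}_{\P,S})$ does not bound the orders, or the ranks, of these finite tops as $H_0$ ranges over all finitely generated subgroups, so the claimed single bound $r(n,m,R)$ does not follow from the three ingredients you combine.

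The gap is reparable, but it needs one further idea. The cleanest fix within your framework: fix one congruence homomorphism, i.e.\ reduce all of $\GL(n,\mathcal{O}_{\P,S})$ modulo a suitable prime $\rho$ of $\mathcal{O}_\P$ outside $S$. Then $\varphi_\rho(H)$ lies in the \emph{fixed} finite group $\GL(n,\mathcal{O}_\P/\rho)$, whose rank is bounded once and for all, while the kernel $H_\rho$ is unipotent-by-abelian because $H$ is SF (the same congruence-kernel fact the paper uses throughout); your unipotent and $S$-unit bounds then apply to $H_\rho$, and Lemma~\ref{RkHFormulae}(i) finishes --- note that this version needs no passage to finitely generated subgroups at all. (Alternative repairs: a Jordan--Mal'cev type theorem bounding, by a function of $n$ alone, the index of an abelian normal subgroup in a completely reducible SF subgroup of $\GL(n,\P)$ in characteristic zero; or use that $H$ itself, being SF, has a solvable normal subgroup of one fixed finite index.) For comparison, the paper's proof is two lines and different in spirit: $\rk(U(H))$ is finite by Corollary~\ref{GinUTP} (no finite generation needed), and $H/U(H)$ is \emph{finitely generated} abelian-by-finite by \cite[4.10, p.~57]{Wehrfritz} --- a structural theorem whose underlying mechanism, finite generation of unit groups of finitely generated domains, is exactly what your Dirichlet computation re-proves in the number-field case, but which also packages the finite part that your argument leaves unbounded.
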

\begin{proof}
It suffices to confirm that $H/U(H)$ has finite rank. Indeed,
$H/U(H)$ is finitely generated AF by \cite[4.10, \mbox{p.
57}]{Wehrfritz}.
%hence of finite rank (\mbox{cf.} the first paragraph in the proof
%of Lemma~\ref{TauGFinite}).
%
%Yes, OK: H has normal triangularizable subgroup N of finite index,
%and because subnormal in c.red. is c.red, we have U(N) < U(H).
%? Or: U(N)^h is normal unipotent subgroup of N for all h in H;
%so U(N)^h \leq U(N) i.e. U(N) is normal unipotent subgroup of H.
%Then N U(H)/U(H) \cong N/(N\cap U(H)) is f.g. abelian by 4.10
%Wehrfritz.
%
%
%replacing $R$ by a finitely generated extension if necessary
%%Add in entries in intertwining matrix and its inverse.
%we can assume that $U(H)\leq \UT(n,R)$. Finally, $\UT(n,R)$ embeds
%into $\GL(nm,\frac{1}{\mu}\Z)$, with image conjugate to
%$\UT(nm,\Z)$, which has finite rank.
\end{proof}
%\begin{remark}
%We would have $\rk(H) \leq f(n,R)$ if we could prove that the rank
%of an irreducible abelian subgroup of $\GL(n,\frac{1}{\mu}\Z)$ is
%bounded by a function of $n$.
%\end{remark}

%In the next result we need the inequality $\rk(\UT(n,p)) \leq
%n(n-1)/2$ for $p$ prime (which follows from
%Lemma~\ref{RkHFormulae} and the discussion beginning
%Section~\ref{UnipotGpsSect} again).

%All of this section can be re-written for G not f.g.,
%but in GL(n,R) for f.g. R. Cf. Lemma~\ref{FGTooStrong}.
\begin{proposition}\label{RkBounds}
Suppose that $G\leq \GL(n,\mathcal{O}_\P)$ is SF. Then $\hi(G)
\leq \allowbreak nm(nm+1)/2$ and $\rk(G)\leq \allowbreak
nm(2nm+3)/2$.
\end{proposition}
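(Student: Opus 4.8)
The plan is to run the reduction set up in Remark~\ref{RkHBreakdown}. Choose $\rho$ so that the congruence kernel $G_\rho$ is unipotent-by-abelian and torsion-free, with $G/G_\rho\cong\varphi_\rho(G)$ finite. By Lemma~\ref{RkHFormulae}, $\hi(G)=\hi(G_\rho)+\hi(\varphi_\rho(G))$ and $\rk(G)\le\rk(G_\rho)+\rk(\varphi_\rho(G))$; since $\varphi_\rho(G)$ is finite, $\hi(\varphi_\rho(G))=0$, so $\hi(G)=\hi(G_\rho)$. Thus the task splits into bounding the two invariants of the polyrational kernel $G_\rho$, and bounding $\rk(\varphi_\rho(G))$ for the finite image.

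For $G_\rho$, let $U$ be its unipotent radical. By Kolchin's theorem $U$ is conjugate into $\UT(n,\P)$, so Corollary~\ref{GinUTP} gives $\rk(U)=\hi(U)\le nm(nm-1)/2$. The quotient $A:=G_\rho/U$ is finitely generated abelian and, up to isomorphism, completely reducible in $\GL(n,\P)$. I would bound $A$ through its enveloping algebra $\mathcal{A}=\P[A]\cong\K_1\times\cdots\times\K_r$, a commutative semisimple algebra with $\sum_i[\K_i:\P]\le n$; the projection to $\K_i^\ast$ sends an element of $A$ to an eigenvalue of the corresponding matrix of $G_\rho$. The crucial point is that, because $G\le\GL(n,\mathcal{O}_\P)$, these eigenvalues are algebraic integers whose inverses are again algebraic integers, hence units of $\mathcal{O}_{\K_i}$. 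Dirichlet's unit theorem then bounds the rank of the image in $\K_i^\ast$ by $r_1+r_2\le[\K_i:\Q]=m[\K_i:\P]$, and summing gives $\rk(A)\le m\sum_i[\K_i:\P]\le nm$ (and $\hi(A)\le nm$ likewise). Therefore $\rk(G_\rho),\hi(G_\rho)\le nm(nm-1)/2+nm=nm(nm+1)/2$, which already yields $\hi(G)\le nm(nm+1)/2$.

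It remains to bound $\rk(\varphi_\rho(G))$, and this is where I expect the real work to lie. Here $\varphi_\rho(G)$ is a finite subgroup of $\GL(n,\F_{p^f})$ with $f\le m$; restricting scalars embeds it in $\GL(nm,\F_p)$, so it suffices to bound the rank of an arbitrary finite subgroup $X$ of $\GL(nm,\F_p)$ uniformly in $p$. I would reduce to Sylow subgroups via the Guralnick--Lucchini generation theorem, which gives $\rk(X)\le\max_\ell\rk_\ell(X)+1$, where $\rk_\ell(X)$ is the largest rank of an $\ell$-subgroup. A Sylow $p$-subgroup is conjugate into $\UT(nm,\F_p)$, so the filtration argument of Lemma~\ref{GinUTQ} bounds its rank by $nm(nm-1)/2$; an $\ell$-subgroup with $\ell\ne p$ is completely reducible and, decomposing into irreducibles and using that faithful irreducible $\ell$-groups have logarithmically small rank, has rank at most $nm$. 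Hence $\rk(\varphi_\rho(G))\le nm(nm-1)/2+1\le nm(nm+2)/2$.

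Combining, $\rk(G)\le nm(nm+1)/2+nm(nm+2)/2=nm(2nm+3)/2$, as required. The main obstacle is the last paragraph: controlling the rank of the finite quotient independently of the residue characteristic $p$. The unipotent ($p$-)part is forced by the same triangular filtration used over $\Q$, but handling the $p'$-part and packaging everything through the Guralnick--Lucchini bound, while staying within the tight budget of $3nm/2$ left over after the $\UT$ estimate, is the delicate step.
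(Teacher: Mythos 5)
Your first two paragraphs are correct and in substance parallel the paper's own proof of the infinite part: the paper restricts scalars to view $G$ inside $\GL(nm,\Z)$ and then combines Suprunenko's block-diagonalization with Dirichlet's Units Theorem, whereas you stay over $\P$ and apply Dirichlet inside the enveloping algebra $\K_1\times\cdots\times\K_r$; either way $\rk(G_\rho)\le nm(nm-1)/2+nm=nm(nm+1)/2$, which gives the bound on $\hi(G)$. The genuine gap is in your third paragraph, exactly where you predicted the real work lies, and it is an error rather than a missing citation: it is false that a completely reducible $\ell$-subgroup of $\GL(N,\F_p)$ ($\ell\neq p$, $N=nm$) has rank at most $N$, and false that faithful irreducible $\ell$-groups have logarithmically small rank. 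Take $p\equiv 1\pmod 4$ and $X_0=Q_8\circ C_4\leq \GL(2,\F_p)$ (central product over the common centre of order $2$): then $\Phi(X_0)$ has order $2$ and $X_0/\Phi(X_0)$ is elementary abelian of order $8$, so $d(X_0)=3$; the block-diagonal product of $k$ copies is a completely reducible $2$-subgroup of $\GL(2k,\F_p)$ of rank at least $3k=3N/2>N$. (This is precisely the family showing that the Kov\'acs--Robinson bound $\lfloor 3N/2\rfloor$ is sharp.) Likewise $C_\ell\wr C_\ell$ has a faithful irreducible representation of degree $\ell$ in characteristic prime to $\ell$ (induce a coordinate character of the base), yet it contains the base $C_\ell^{\,\ell}$, so its rank is at least $\ell$ --- linear, not logarithmic, in the degree. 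Hence your estimate $\rk(\varphi_\rho(G))\le nm(nm-1)/2+1$ is unjustified as written.

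The repair forces you back onto the paper's tool. The correct uniform bound for the Sylow $\ell$-subgroups with $\ell\neq p$ is $\lfloor 3N/2\rfloor$: by Maschke such a subgroup and all of its subgroups are completely reducible, so the Kov\'acs--Robinson theorem applies. With that substitution your Guralnick--Lucchini scheme does close, since $\max\{N(N-1)/2,\lfloor 3N/2\rfloor\}+1\le N(N+2)/2$ for $N\ge 2$ (the case $N=1$ being trivial), and the final arithmetic again gives $\rk(G)\le nm(2nm+3)/2$. But once Kov\'acs--Robinson is invoked, the Guralnick--Lucchini reduction to Sylow subgroups is superfluous: the paper simply observes that an arbitrary subgroup $Y\le\varphi_\rho(G)$ is an extension of its unipotent radical, which lies in a conjugate of $\UT(N,p)$ and so has rank at most $N(N-1)/2$, by a completely reducible group of degree $N$, whence $\rk(\varphi_\rho(G))\le N(N-1)/2+\lfloor 3N/2\rfloor\le N(N+2)/2$ directly. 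So your route can be made to work, but only by importing the paper's key lemma, at which point the paper's argument is the shorter one.
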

\begin{proof}
%Left multiplication of elements of an integral basis for
%$\mathcal{O}_\P$ by $\mathcal{O}_\P$ yields a ring embedding
%$\mathcal{O}_\P\hookrightarrow \mathrm{Mat}(m,\Z)$, and
%consequently an embedding $\GL(n, \mathcal{O}_\P)\hookrightarrow
%\mathrm{GL}(nm,\Z)$. We may therefore assume that $G\leq
%\GL(n,\Z)$.
Since $\GL(n, \mathcal{O}_\P)$ embeds into $\mathrm{GL}(nm,\Z)$,
we may assume without loss of generality that $G\leq \GL(n,\Z)$.

(i) \phantom{} Suppose that $G$ is abelian and $\Q$-irreducible.
Then the enveloping algebra $\langle G\rangle_\Q$ is a number
field of degree $n$ over $\Q$. Moreover, $G$ is contained in the
unit group of the ring of integers of $\langle G\rangle_\Q$. Hence
$\rk(G)\leq n$ by Dirichlet's Units Theorem \cite[Theorem 12.6,
\mbox{p.}~227]{StewTall}.

(ii) \phantom{} If $G$ is abelian and completely reducible over
$\Q$, then \cite[Lemma 4, \mbox{p.}~173]{Supr} implies that $G$ is
conjugate to a group of block diagonal matrices $\{
\mathrm{diag}(\mu_1(g), \ldots , \mu_k(g)) \ | \ g \in G\}$ where
$\mu_i(G)\leq \allowbreak \GL(n_i,\Z)$ is $\Q$-irreducible.
Therefore, by (i),
\[
\rk(G)\leq {\textstyle \sum}_{i=1}^k\rk(\mu_i(G)) = {\textstyle
\sum}_{i=1}^kn_i = n.
\]

(iii) \phantom{} If $G$ is UA then $\rk(G) \leq \frac{n(n-1)}{2} +
n = n(n+1)/2$ by (ii) and Lemma~\ref{GinUTQ}.

(iv) \phantom{} By Remark~\ref{RkHBreakdown}, there is an odd
prime $p$ such that $\hi(G) = \rk(G_\rho)$ and $\rk(G)
\leq\allowbreak \rk(G_\rho) +\allowbreak \rk(\varphi_\rho(G))$ for
$\rho = pR$. Thus $\hi(G) \leq n(n+1)/2$. %by (iii).
By \cite{KovacsRobinson}, a finite completely reducible linear
group of degree $n$ can be generated by $\lfloor 3n/2\rfloor$
elements. Since $\rk(\UT(n,p)) \leq\allowbreak n(n-1)/2$, we
deduce that $\rk(\varphi_\rho(G))\leq n(n+2)/2$. The stated bound
on $\rk(G)$ follows.
\end{proof}
\begin{remark}\label{InPFCase}\
(i) If $n\geq 4$ then the bound on $\rk(G)$ in
Proposition~\ref{RkBounds} can be improved using $\rk(\GL(n,p))
\leq \allowbreak \frac{n^2}{4} +1$; see \cite[\mbox{p.}
199]{Pyber}.

(ii) $\rk(\GL(n,p)) \geq \lfloor n^2/4\rfloor$ because $\UT(n,p)$
has an elementary abelian subgroup of order $p^{\lfloor
n^2/4\rfloor}$.
%Should this remark really go here?
%
%(iii) Solvable $\Z$-linear groups are polycyclic \cite[Corollary
%1, \mbox{p.}~26]{Segal}, so $G$ in Proposition~\ref{RkBounds} is
%PF.
\end{remark}

\section{Subgroups of finite index} \label{Sect3}

In this section we first derive a rank-based criterion to
recognize when a subgroup of a finitely generated linear group of
finite rank has finite index. Subsequently we prove a result about
the unipotent radical that forms a key piece of our main
algorithm.

\subsection{Ranks and isolators}\label{RandISection}

We recall some definitions from \cite[\mbox{pp.}~83--86]{LenRob}.
The \emph{$p$-rank} ($p$ prime) of an abelian group is the
cardinality of a maximal $\Z_p$-linearly independent subset of
elements of order $p$. A solvable group $G$ has \emph{finite
abelian ranks} ($G$ is a \emph{solvable FAR group}) if there is a
series of finite length in $G$ with each factor abelian, and of
finite torsion-free rank and finite $p$-rank for every prime $p$.
%
%Had definition of FAR wrong! *Doublecheck.*
%
A \emph{minimax group} is a group that has a series of finite
length whose factors satisfy either the maximal condition or the
minimal condition on subgroups. The minimality $m(G)$ of a
solvable minimax group $G$ is the number of infinite factors in a
series of $G$ with each factor finite, cyclic, or quasicyclic. For
finitely generated solvable groups, the notions of FAR, minimax,
and finite Pr\"{u}fer rank all coincide
\cite[\mbox{pp.}~175--176]{LenRob}.

The following theorem and its proof were communicated to us by
D.~J.~S.~Robinson.
\begin{theorem}[D.~J.~S. Robinson] \label{SolvableFAR}
Let $H$ be a subgroup of a finitely generated solvable FAR group
$G$. Then $|G:H|$ is finite if and only if $\hspace*{.35mm} \hi(H)
= \hi(G)$.
\end{theorem}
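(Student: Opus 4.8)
The plan is to prove the two implications separately, the forward one being routine and the reverse requiring an induction on derived length.

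For the forward direction, suppose $|G:H|$ is finite. I would pass to the core $N = \mathrm{core}_G(H)$, a normal subgroup of finite index in $G$ contained in $H$. Since $G/N$ is finite, Lemma~\ref{RkHFormulae}(ii) gives $\hi(G) = \hi(N) + \hi(G/N) = \hi(N)$, while torsion-free rank cannot increase under passage to subgroups, so $\hi(N) \leq \hi(H) \leq \hi(G)$. These force $\hi(H) = \hi(G)$.

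For the reverse direction, assume $\hi(H) = \hi(G) =: h$ and induct on the derived length $d$ of $G$. If $d \leq 1$ then $G$ is finitely generated abelian, hence $G \cong \Z^{h} \times T$ with $T = \tau(G)$ finite; the image of $H$ in $G/T \cong \Z^{h}$ then has full torsion-free rank $h$, so it has finite index, and as $T$ is finite it follows that $|G:H|$ is finite. For the inductive step put $A = G^{(d-1)}$, the last nontrivial term of the derived series, a characteristic abelian subgroup. Applying Lemma~\ref{RkHFormulae}(ii) to the normal subgroup $A$ of $G$, and to $H \cap A \trianglelefteq H$ with $H/(H\cap A) \cong HA/A$, the equality of Hirsch numbers splits (using monotonicity) as $\hi(HA/A) = \hi(G/A)$ and $\hi(H\cap A) = \hi(A)$. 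The induction hypothesis applied to the finitely generated solvable FAR group $G/A$, of smaller derived length, and its subgroup $HA/A$ yields $|G:HA| < \infty$. Since $|HA:H| = |A : A\cap H|$, it remains only to prove $|A : H\cap A| < \infty$.

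Set $B = H \cap A$ and $K = \mathrm{core}_G(HA)$, a normal subgroup of finite index in $G$. As $B$ is normalized by $H$ and centralized by the abelian group $A$, it is normalized by $HA$, hence by $K$; thus $B$ is a $\Z[\bar K]$-submodule of $A$, where $\bar K$ is the (finitely generated) image of $K$ in its action on $A$. By construction $\hi(A/B) = 0$, so $A/B$ is a torsion minimax abelian group. The decisive structural input, and the sole place where finite generation of $G$ enters, is that the last derived term $A$ is finitely generated as a module over $\Z[G/A]$ (a standard property of finitely generated solvable groups), hence, $\bar K$ having finite index, finitely generated over $\Z[\bar K]$. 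Then $A/B$ is generated as a $\Z[\bar K]$-module by finitely many torsion elements; since $\bar K$ acts by automorphisms, these generators and all their translates have order dividing a fixed integer $e$, so $A/B$ has exponent dividing $e$. A minimax abelian group of bounded exponent has no quasicyclic section and is therefore finite, giving $|A:B| < \infty$ and completing the induction.

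I expect the main obstacle to be justifying that $A = G^{(d-1)}$ is finitely generated as a $\Z[G/A]$-module: this is precisely where the finite generation of $G$ is indispensable, as the example $\Z \leq \Q$ (equal torsion-free rank, infinite index) shows that it cannot be dispensed with. Once that module-finiteness is available, the reduction to bounded exponent and the appeal to the minimax structure of torsion abelian groups are routine, as is the remaining bookkeeping with Lemma~\ref{RkHFormulae} and the isomorphisms above.
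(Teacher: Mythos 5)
Your forward direction, and the Hirsch-number bookkeeping that opens your reverse direction (splitting $\hi(H)=\hi(G)$ into $\hi(H\cap A)=\hi(A)$ and $\hi(HA/A)=\hi(G/A)$ via Lemma~\ref{RkHFormulae}(ii), then disposing of $G/A$ by induction), are correct. The gap is exactly the step you yourself flag as the ``decisive structural input'': the assertion that the last derived term $A=G^{(d-1)}$ of a finitely generated solvable group is finitely generated as a $\Z[G/A]$-module. This is not a standard fact, and it is false --- even for the FAR groups the theorem is about. Take $N=\UT(3,\Z[1/2])$ and let $s$ act on $N$ as conjugation by $\mathrm{diag}(2,1,2)$, so $e_{12}(x)\mapsto e_{12}(2x)$, $e_{23}(x)\mapsto e_{23}(x/2)$, and the centre $e_{13}(\Z[1/2])$ is fixed elementwise. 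Then $G_0=N\rtimes\langle s\rangle=\langle s,\, e_{12}(1),\, e_{23}(1)\rangle$ is a finitely generated solvable minimax (hence FAR) group, $G_0''=e_{13}(\Z[1/2])$ is central in $G_0$, and so $e_{13}(\Z)$ is normal. The quotient $G=G_0/e_{13}(\Z)$ is then a finitely generated solvable FAR group of derived length $3$ whose last derived term $A=G''\cong \Z[1/2]/\Z\cong C_{2^\infty}$ is central. For a central $A$, ``finitely generated as a $\Z[G/A]$-module'' means ``finitely generated as a group'', which fails for a quasicyclic group; equivalently, every finite subset of $A$ lies in one of the finite characteristic subgroups $C_{2^n}$, so its normal closure (= submodule it generates) is finite. (The claim also fails without the FAR hypothesis: for the free solvable group of rank $2$ and derived length $3$, $G''$ is the relation module of the free metabelian group of rank $2$, which is not finitely generated since that group is not of type $\mathrm{FP}_2$.) So your endgame --- $A/B$ generated by boundedly many torsion translates, hence of bounded exponent, hence finite --- has nothing to stand on, and the induction does not close.

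This obstruction is precisely what the paper's proof is designed around. Robinson inducts not on derived length but on the minimality $m(G)$, and splits according to the finite residual $D$ of $G$, a divisible periodic abelian group --- which is exactly where quasicyclic sections such as the $A$ above live. When $D\neq 1$, finite generation is exploited through the quotient: induction gives $|G:HD|<\infty$, so $HD$ is finitely generated, whence $HD=HD_0$ for some finite $D_0\leq D$ and $|HD:H|<\infty$; no module finiteness is invoked. When $D=1$, he chooses $A$ to be a nontrivial \emph{torsion-free} abelian normal subgroup, and the hypothesis $\hi(H)=\hi(G)$ is used to force $H\cap A\neq 1$, which strictly decreases minimality and allows induction on $HA/(H\cap A)$ with its subgroup $H/(H\cap A)$. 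To repair your argument you would have to reintroduce this dichotomy (or otherwise eliminate the divisible periodic part before descending to the abelian layer); in the residually finite case your bounded-exponent/minimax reasoning is fine as far as it goes, but the module finiteness it rests on cannot be supplied in general.
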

\begin{proof}
The `only if' direction being clear, assume that $\hi(H) =
\hi(G)$. For $N \unlhd G$,
\begin{align*}
\hi(HN/N) &= \hi(H) - \hi(H \cap N) \\
& \geq \hi(G) - \hi(N) = \hi(G/N).
\end{align*}
Thus $\hi(HN/N) = \hi(G/N)$. We prove that $|G : H|$ is finite by
induction on $m(G)$. If $m(G)=0$ then $G$ is finite, so let
$m(G)>0$.
%This is the base-level induction step...almost left out!

Denote the finite residual
%(intersection of all subgroups of finite index)
of $G$ by $D$; this
%CAREFUL...usually the finite residual is defined only for groups
%with min
is a divisible periodic abelian group
%divisible abelian group with min
%Soluble groups with min are periodic; see LenRob, 1.4, p.23.
%(Indeed any group with min is periodic; see Ex. 3.1.7 Robinson
%group theory text.)
%Non-trivial example of a divisible periodic abelian group:
%Q/Z (additively).
\cite[5.3.1, \mbox{p.}~96]{LenRob}.
%Both 'divisible' and 'abelian' are stated in 5.3.1.
%For periodicity, see the PROOF of 5.3.1, p.99 ... 'R is a torsion
%group'.
Suppose that $D \neq 1$. Then $m(G/D) < m(G)$,
%A non-trivial divisible group is infinite. Definitely need this
%so that G/D has minimax length strictly less than that of G. And
%all that is needed is that D is infinite to get the smaller minimax
%length; subgroups, quotients, extensions of (solvable) minimax
%are minimax.
and by the inductive hypothesis
%and the fact adduced in the first paragraph
$|G : HD|$ is finite. Hence $HD$ is finitely generated, so $HD =
HD_0$ where $D_0 \leq D$ is finitely generated, i.e., finite.
%= \{d \in D \mid d^e = 1\}$ for some $e > 0$.
This implies that $|HD : H|$ is finite, as is $|G : H|$.

Suppose now that $D = 1$. Then $G$ has a non-trivial torsion-free
abelian normal subgroup $A$ (for example, the penultimate term in
the derived series of a non-trivial torsion-free normal subgroup
of $G$).
% of finite rank.
%%CAN omit 'finite rank' (?): subgroup of minimax is minimax.
%Residually finite solvable minimax groups are (torsion-free)-by-finite;
%see Prop.6, p.370 Lubotzky & Segal. Since G is not finite, take
%N normal in G non-trivial torsion-free. The penultimate term T in the
%derived series of N is abelian, non-trivial, torsion-free, and because
%it is characteristic in N must be normal in G.
Since $m(G/A) < m(G)$,
%Yes, really less; without torsion-free hypothesis we could have A finite.
%Again, all we need is that A is an infinite normal subgroup.
by induction $|G : HA|$ is finite.
% and $HA$ is finitely generated.
Next, $H \cap A \neq 1$; otherwise $\hi(H) = \allowbreak \hi(HA/A)
= \hi(G/A) < \hi(G)$. So
%$1 < H \cap A \lhd HA$ and
the result holds for $HA/(H \cap A)$ and its subgroup $H/(H\cap
A)$ by induction.
%First, m(HA/(H\cap A)) is *strictly less than* m(G): m(HA/(H\cap
%A)) is strictly less than m(HA) because H\cap A is non-trivial
%torsion-free minimax, and m(HA) is less than m(G). Also, h(HA/(H
%\cap A)) = h(HA)-h(H\cap A) = h(H)-h(H\cap A) = h(H/(H\cap A)).
Therefore $|HA : H|$ is finite, as is $|G : H|$.
\end{proof}

%is is is is is is

\begin{remark}
Finitely generated linear groups are residually finite \cite[4.2,
\mbox{p.}~51]{Wehrfritz}, so for our algorithms we only need that
part of the proof of Theorem~\ref{SolvableFAR} in which $D=1$.
\end{remark}

\begin{corollary}\label{RobKey}
Let $H\leq G\leq \GL(n,\F)$ where $G$ is finitely generated and of
finite Pr\"{u}fer rank. Then $|G:H|$ is finite if and only if
$\hspace*{.35mm} \hi(H) = \hi(G)$.
\end{corollary}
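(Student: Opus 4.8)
The plan is to reduce the corollary to Theorem~\ref{SolvableFAR} by passing to a solvable normal subgroup of finite index. First I would dispose of the `only if' direction. If $|G:H|$ is finite, let $H_0$ be the normal core of $H$ in $G$; it has finite index, so $\hi(G/H_0)=0$ and Lemma~\ref{RkHFormulae}~(ii) gives $\hi(H_0)=\hi(G)$. Since $H_0\leq H\leq G$ forces $\hi(H_0)\leq\hi(H)\leq\hi(G)$, we get $\hi(H)=\hi(G)$. The substance lies in the converse.

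Assume $\hi(H)=\hi(G)$. Because $G$ has finite Pr\"ufer rank, Theorem~\ref{GroundZero} shows $G$ is SF; fix a solvable normal subgroup $S$ of finite index in $G$. As a finite-index subgroup of a finitely generated group, $S$ is finitely generated, and it inherits finite Pr\"ufer rank from $G$. By the coincidence of FAR, minimax, and finite Pr\"ufer rank for finitely generated solvable groups (recorded just before Theorem~\ref{SolvableFAR}), $S$ is a finitely generated solvable FAR group, so Theorem~\ref{SolvableFAR} applies to its subgroups. I would then transfer the Hirsch-number equality to $S$: applying Lemma~\ref{RkHFormulae}~(ii) with the normal subgroup $S$ gives $\hi(H)=\hi(H\cap S)+\hi(HS/S)$ and $\hi(G)=\hi(S)+\hi(G/S)$, and since both $HS/S$ and $G/S$ embed in the finite group $G/S$ their Hirsch numbers vanish. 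Hence $\hi(H\cap S)=\hi(H)=\hi(G)=\hi(S)$. Now $H\cap S$ is a subgroup of $S$ with equal Hirsch number, so Theorem~\ref{SolvableFAR} yields that $|S:H\cap S|$ is finite. Finally $|G:H|\leq|G:H\cap S|=|G:S|\cdot|S:H\cap S|$ is finite, which completes the argument.

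The only genuine obstacle is confirming that $S$ satisfies the hypotheses of Theorem~\ref{SolvableFAR}, namely that a finite-index solvable subgroup of our linear group is a finitely generated solvable FAR group; once that is secured the rest is bookkeeping with the additive Hirsch-number formula. I expect no difficulty in the reduction itself, since intersecting $H$ with the finite-index solvable subgroup $S$ is precisely the step that removes the `by-finite' part while preserving the Hirsch number.
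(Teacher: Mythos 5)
Your proof is correct and is exactly the argument the paper intends: the corollary is stated without proof as an immediate consequence of Theorem~\ref{SolvableFAR}, the implicit reduction being precisely yours---pass to a finitely generated solvable normal subgroup $S$ of finite index (which is FAR since finite Pr\"ufer rank, minimax, and FAR coincide for finitely generated solvable groups), transfer the Hirsch-number equality to $H\cap S\leq S$ via Lemma~\ref{RkHFormulae}~(ii), and apply the theorem. Both directions, including your core argument for the `only if' part, are sound.
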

%\begin{proof}
%Let $N$ be a normal solvable subgroup of finite index in $G$. It
%is readily checked
%%(\mbox{cf.} the first paragraph in the proof of
%%Theorem~\ref{SolvableFAR})
%that $\hi(H\cap N) = \allowbreak \hi(H)$ and $|G:H|$ is finite if
%and only if $|N:N\cap H|$ is finite. Since $\hi(N) = \hi(G)$ and
%$N$ is a finitely generated solvable FAR group, the corollary now
%follows from Theorem~\ref{SolvableFAR}.
%\end{proof}

%Proof of this corollary hardly needs spelling out, especially after
%the preceding proof.

The \emph{isolator} in $G$ of a subgroup $H$ is
\[
I_G(H) = \{ x \in G \mid x^k \in H \text{ for some positive
integer } \, k\} .
\]
\begin{theorem}\label{LenRobKey}
Let $G$ be a finitely generated SF group, and let $H\leq G$. Then
$|G:H|$ is finite if and only if $I_G(H) = G$.
\end{theorem}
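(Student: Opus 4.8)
The plan is to reduce the theorem to the Hirsch-number criterion of Corollary~\ref{RobKey}, which already tells us that $|G:H|$ is finite if and only if $\hi(H) = \hi(G)$. The forward implication is immediate and does not use finite rank: if $|G:H|$ is finite, then for a given $x \in G$ the right cosets $Hx^i$ $(i \geq 0)$ cannot all be distinct, so $Hx^i = Hx^j$ for some $0 \leq i < j$, whence $x^{j-i} \in H$ and $x \in I_G(H)$. Thus $I_G(H) = G$.

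For the converse I would assume $I_G(H) = G$ and prove $\hi(H) = \hi(G)$; Corollary~\ref{RobKey} then yields that $|G:H|$ is finite. Here I use that $G$, being a finitely generated SF group, has finite torsion-free rank by Proposition~\ref{FgFRiffSF}, so that the additive formula of Lemma~\ref{RkHFormulae}(ii) is available. The two observations that drive the argument are that the isolator hypothesis is inherited by quotients and by normal intersections: for any $N \trianglelefteq G$, every $gN \in G/N$ satisfies $(gN)^k = g^k N \in HN/N$ for a suitable $k$, so $I_{G/N}(HN/N) = G/N$; and every $x \in N$ has a power $x^k \in H \cap N$, so $I_N(H \cap N) = N$.

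With these in hand I would induct. Passing to a solvable normal subgroup $S$ of finite index reduces to the case $G$ solvable, since $\hi(S) = \hi(G)$ and establishing $\hi(H \cap S) = \hi(S)$ forces $\hi(H) = \hi(G)$. For $G$ solvable I would induct on derived length using $N = G'$: the inherited hypotheses give $\hi(HN/N) = \hi(G/N)$ (the abelian base case) and $\hi(H \cap N) = \hi(N)$ (induction), and then
\[
\hi(H) = \hi(H \cap N) + \hi(H/(H \cap N)) = \hi(N) + \hi(G/N) = \hi(G)
\]
by Lemma~\ref{RkHFormulae}(ii) and $H/(H \cap N) \cong HN/N$. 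The base case is the genuine content: when $G$ is abelian, $I_G(H) = G$ says exactly that $G/H$ is periodic, so $\hi(G/H) = 0$ and $\hi(H) = \hi(G)$. The main obstacle is less any single computation than arranging the induction so that the isolator condition and the additivity of $\hi$ interact cleanly; once the two inheritance observations and the abelian base case are isolated, the remaining steps are routine.
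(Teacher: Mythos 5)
There is a genuine gap, and it is one of scope: your argument proves the theorem only under hypotheses that the statement does not make. Theorem~\ref{LenRobKey} concerns an \emph{arbitrary} finitely generated SF group, but your converse begins by invoking Proposition~\ref{FgFRiffSF} to conclude that $G$ has finite torsion-free rank. That proposition applies to finitely generated subgroups of $\GL(n,\P)$, $\P$ a number field, not to abstract SF groups, and its conclusion is false for them: $\Z\wr\Z$ is finitely generated and metabelian with infinite Hirsch number. Likewise Corollary~\ref{RobKey}, the target of your reduction, requires $G$ to be a finitely generated \emph{linear} group of \emph{finite Pr\"ufer rank}. The theorem covers groups for which both requirements fail, e.g.\ the lamplighter group $G=\Z_2\wr\Z$, which is finitely generated, metabelian, and linear over $\F_2(t)$ (generated by $d=\mathrm{diag}(t,1)$ and one unitriangular matrix), but has infinite Pr\"ufer rank. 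For this $G$ the Hirsch-number criterion you reduce to is not merely unavailable but false as a characterization of finite index: $H=\langle d\rangle$ satisfies $\hi(H)=\hi(G)=1$ yet has infinite index (and, consistently with the theorem, $I_G(H)\neq G$, since no power of $ud$ lies in $H$ for $u$ unitriangular). So no argument that converts ``$I_G(H)=G$'' into ``$\hi(H)=\hi(G)$'' and then cites Corollary~\ref{RobKey} can establish the theorem in its stated generality; the infinite-rank case needs entirely different techniques.

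For comparison, the paper does not prove this statement at all: its ``proof'' is the citation \cite[2.3.14, \mbox{p.}~45]{LenRob}, where the result is established for all finitely generated SF groups. What your argument does correctly prove is still worth recording: the forward direction is fine, the two inheritance observations ($I_{G/N}(HN/N)=G/N$ and $I_N(H\cap N)=N$) are fine, and your derived-length induction is essentially a proof of Lemma~\ref{IsolatorImpliesRk}, which the paper states without proof; combined with Corollary~\ref{RobKey} this yields the equivalence for finitely generated linear groups of finite Pr\"ufer rank, which is in fact the only case the paper's algorithms use. But as a proof of Theorem~\ref{LenRobKey} as stated, the appeal to Proposition~\ref{FgFRiffSF} is a misapplication and the reduction fails.
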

\begin{proof}
See \cite[2.3.14, \mbox{p.}~45]{LenRob}.
\end{proof}

%The next couple of lemmas, while similar in theme to
%Theorems~\ref{SolvableFAR} and \ref{LenRobKey}, relax the
%hypothesis of finite generation somewhat.
\begin{lemma}\label{IsolatorAll}
Suppose that $G$ is a solvable FAR group with a finitely generated
subgroup $H$ such that $\hi(H) =\allowbreak \hi(G)$. Then
$I_G(H)=G$.
\end{lemma}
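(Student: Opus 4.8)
The plan is to reduce the statement about the possibly infinitely generated group $G$ to the finitely generated case, where Theorem~\ref{SolvableFAR} applies. Since $I_G(H) = G$ means precisely that every element of $G$ has a positive power in $H$, I would fix an arbitrary $x \in G$ and work inside the subgroup $K = \langle H, x\rangle$. As $H$ is finitely generated, so is $K$; and being a subgroup of the solvable FAR group $G$, it is itself solvable FAR. Indeed, intersecting a defining series of $G$ with $K$ produces a series of $K$ whose factors embed in the corresponding factors of $G$, and so inherit the abelian, finite-torsion-free-rank, and finite-$p$-rank conditions. Thus $K$ is a finitely generated solvable FAR group.

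Next I would pin down the Hirsch number of $K$. Torsion-free rank is monotone under passage to subgroups: intersecting a subnormal series of $G$ (with periodic or infinite cyclic factors) with $K$ shows that each infinite cyclic factor of $K$ maps into an infinite cyclic factor of $G$, whence $\hi(K) \leq \hi(G)$; the same reasoning applied to $H \leq K$ gives $\hi(H) \leq \hi(K)$. Combined with the hypothesis $\hi(H) = \hi(G)$, this sandwiches $\hi(K)$ and forces $\hi(H) = \hi(K)$.

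Now the key input applies. Applying Theorem~\ref{SolvableFAR} to the finitely generated solvable FAR group $K$ and its subgroup $H$, the equality $\hi(H) = \hi(K)$ yields that $|K:H|$ is finite. A standard pigeonhole argument then finishes: if $N = |K:H|$, the $N+1$ cosets $x^0 H, x^1 H, \ldots, x^N H$ cannot all be distinct, so $x^i H = x^j H$ for some $0 \leq i < j \leq N$ and hence $x^{j-i} \in H$ with $j-i > 0$. Therefore $x \in I_G(H)$, and since $x$ was arbitrary, $I_G(H) = G$.

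The substantive point, and the only place requiring care, is this localization step: because $G$ need not be finitely generated, Theorem~\ref{SolvableFAR} cannot be invoked for $G$ directly, so I must instead apply it to $K = \langle H, x\rangle$ for each $x$ separately. This is legitimate precisely because the class of solvable FAR groups is closed under taking subgroups and because $\hi$ is monotone under passage to subgroups; verifying these two properties (both via the intersection-with-a-series technique) is what makes the reduction valid. Everything else is formal.
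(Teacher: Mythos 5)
Your proof is correct and follows exactly the paper's (very terse) argument: the paper likewise observes that $\hi(\langle g, H\rangle) = \hi(H)$ for every $g \in G$ and then applies Theorem~\ref{SolvableFAR} to the finitely generated group $\langle g, H\rangle$. Your write-up simply makes explicit the two monotonicity facts (closure of solvable FAR groups under subgroups, and $\hi$ non-decreasing in subgroups) that the paper leaves implicit.
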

\begin{proof}
Since $\hi(\langle g , H\rangle) = \hi(H)$ for every $g\in G$,
%; so $g^k \in H$ for some $k>0$ by
the lemma follows from Theorem~\ref{SolvableFAR}.
\end{proof}

%Proof of 1st lemma is easier?!
\begin{lemma}\label{IsolatorImpliesRk}
Suppose that $G$ is a group of finite torsion-free rank, and $H$
is a subgroup of $G$ such that $I_G(H) = G$. Then $\hi(G)=\hi(H)$.
\end{lemma}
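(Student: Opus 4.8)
The plan is to compute both Hirsch numbers from a single subnormal series: fix a good series for $G$, intersect it with $H$, and then use the isolator hypothesis to guarantee that no infinite cyclic factor is lost under intersection. Concretely, first I would choose a subnormal series
\[
1 = G_0 \unlhd G_1 \unlhd \cdots \unlhd G_r = G,
\]
in which every factor $G_i/G_{i-1}$ is periodic or infinite cyclic; by definition exactly $\hi(G)$ of these factors are infinite cyclic. Setting $H_i = H \cap G_i$, a short check (using $G_{i-1}\unlhd G_i$) shows $H_{i-1}\unlhd H_i$, with the natural map giving an embedding of $H_i/H_{i-1}$ into $G_i/G_{i-1}$. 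Hence each $H_i/H_{i-1}$ is again periodic or infinite cyclic, so $H$ has finite torsion-free rank and, by repeated application of Lemma~\ref{RkHFormulae}(ii), $\hi(H) = \sum_i \hi(H_i/H_{i-1})$. Since $H_i/H_{i-1}$ can be infinite cyclic only when $G_i/G_{i-1}$ is, this already delivers $\hi(H)\leq \hi(G)$.

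The crux is the reverse inequality, and this is exactly where the hypothesis $I_G(H)=G$ is used. Suppose $G_i/G_{i-1}$ is infinite cyclic, generated by the image of some $x\in G_i$. Because $I_G(H)=G$, there is a positive integer $k$ with $x^k\in H$, hence $x^k\in H_i$. As the image of $x$ generates an infinite cyclic group, $x^k\notin G_{i-1}$, so $x^k\notin H_{i-1}$ and the image of $x^k$ in $H_i/H_{i-1}$ is nontrivial. A nontrivial subgroup of an infinite cyclic group is infinite cyclic, so $H_i/H_{i-1}$ is infinite cyclic whenever $G_i/G_{i-1}$ is. Thus the two series have the same number of infinite cyclic factors, giving $\hi(H)\geq \hi(G)$ and therefore $\hi(H)=\hi(G)$.

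I do not expect a genuinely hard step here: the isolator condition plugs into each infinite cyclic factor in a completely elementary way. The only points requiring care are bookkeeping ones, namely verifying that the intersected series $H_0\unlhd \cdots \unlhd H_r$ really is subnormal with the claimed factor structure, and confirming that each $H_i$ has finite torsion-free rank so that the additivity in Lemma~\ref{RkHFormulae}(ii) is legitimately applicable. Once these are settled, the matching of infinite cyclic factors across the two series is immediate, and the equality of Hirsch numbers follows factor-by-factor.
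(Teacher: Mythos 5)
Your proof is correct. Note that the paper states Lemma~\ref{IsolatorImpliesRk} without any proof at all (it is treated as a standard fact), so there is nothing to contrast: your intersected-series argument --- embedding each factor $(H\cap G_i)/(H\cap G_{i-1})$ into $G_i/G_{i-1}$, and using $I_G(H)=G$ to force a nontrivial, hence infinite cyclic, image whenever $G_i/G_{i-1}$ is infinite cyclic --- is exactly the standard argument that fills this gap, and your bookkeeping (subnormality of the series $H\cap G_i$, and additivity of $\hi$ via Lemma~\ref{RkHFormulae}(ii)) is sound.
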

%\begin{proof}
%This may be proved easily by induction on the length of a series
%of $G$ with periodic or infinite cyclic factors.
%\end{proof}

We consider an illustrative example. Let $G\leq \UT(n,\C)$ be an
algebraic group defined over $\Q$, and set $G_S := G\cap \GL(n,S)$
for a subring $S$ of $\C$.
%
%Is $L$ algebraic?
%
Recall that $L\leq G_\Q$ is an arithmetic subgroup of $G$ if $L$
is commensurable with $G_\Z$; i.e., $L\cap G_\Z$ has finite index
in both $L$ and $G_\Z$.
\begin{lemma}\label{UnipotentArithmeticity}
A finitely generated subgroup $L$ of $G_\Q$ is an arithmetic
subgroup of $G$ if and only if $\, \rk(L) = \rk(G_\Q)$.
\end{lemma}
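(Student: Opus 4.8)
The plan is to transfer the statement entirely into the language of torsion-free rank and then feed it into the isolator and finite-index machinery already assembled. Since $L$, $G_\Z$, $G_\Q$, and every subgroup of $\UT(n,\Q)$ that arises are polyrational by Lemma~\ref{GinUTQ}, Proposition~\ref{RkeqHPolyrat} gives $\rk = \hi$ for each of them; in particular all have finite torsion-free rank, and the condition $\rk(L) = \rk(G_\Q)$ is equivalent to $\hi(L) = \hi(G_\Q)$. I would also record at the outset that $G_\Z$ is finitely generated, being a subgroup of the polycyclic group $\UT(n,\Z)$.

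The key intermediate fact I would establish is $\hi(G_\Z) = \hi(G_\Q)$, and for this I would show $I_{G_\Q}(G_\Z) = G_\Q$. Every $x \in G_\Q \leq \UT(n,\Q)$ is unipotent with rational entries, and writing $x = I + N$ with $N$ strictly upper triangular and expanding $x^k = \sum_{j} \binom{k}{j} N^j$ (a finite sum, since $N^n = 0$) one sees that some positive power $x^k$ has integer entries; such $x^k$ then lies in $G \cap \GL(n,\Z) = G_\Z$. As $G_\Q$ has finite torsion-free rank, Lemma~\ref{IsolatorImpliesRk} yields $\hi(G_\Q) = \hi(G_\Z)$, and hence $\rk(G_\Q) = \rk(G_\Z)$.

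For the forward direction, if $L$ is arithmetic then $L \cap G_\Z$ has finite index in both $L$ and $G_\Z$; finite-index subgroups of a group of finite torsion-free rank share its Hirsch number (via Lemma~\ref{RkHFormulae}(ii), after passing to a normal core), so $\hi(L) = \hi(L \cap G_\Z) = \hi(G_\Z) = \hi(G_\Q)$, whence $\rk(L) = \rk(G_\Q)$. For the converse, assume $\rk(L) = \rk(G_\Q)$, i.e. $\hi(L) = \hi(G_\Q) = \hi(G_\Z)$, and pass to the common overgroup $M = \langle L, G_\Z\rangle \leq G_\Q$. Then $M$ is finitely generated (being generated by two finitely generated subgroups) and of finite Pr\"{u}fer rank (it lies in $\UT(n,\Q)$), and the sandwich $\hi(L) \leq \hi(M) \leq \hi(G_\Q) = \hi(L)$ forces $\hi(M) = \hi(L) = \hi(G_\Z)$. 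Applying Corollary~\ref{RobKey} to $L \leq M$ and to $G_\Z \leq M$ shows that both $L$ and $G_\Z$ have finite index in $M$; two subgroups of finite index in a common group are commensurable, so $L$ is commensurable with $G_\Z$, i.e. arithmetic.

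The main obstacle is the intermediate fact $I_{G_\Q}(G_\Z) = G_\Q$: the general rank machinery alone does not detect it, and one must invoke the unipotent structure to guarantee that each rational point has an integral power. Once that is in place, the remainder is bookkeeping with Hirsch numbers together with a single appeal to the finite-index criterion of Corollary~\ref{RobKey}.
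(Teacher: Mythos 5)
Your proof is correct, but it reaches the conclusion by a genuinely different route than the paper. The paper's proof opens by citing Segal \cite[Lemma 6, p.~138]{Segal}: for any finitely generated $L\leq G_\Q$, the intersection $H:=L\cap G_\Z$ automatically has finite index in $L$. It then applies Robinson's Theorem~\ref{SolvableFAR} to the pairs $H\leq L$ and $H\leq G_\Z$, and finishes with the same two facts you use --- the isolator identity $G_\Q=I_{G_\Q}(G_\Z)$ (asserted there as ``not difficult to verify'', and which you actually prove via the binomial expansion) and Lemma~\ref{IsolatorImpliesRk} --- to get $\rk(G_\Q)=\rk(G_\Z)$. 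You never invoke Segal's lemma: where the paper intersects, you join, forming $M=\langle L,G_\Z\rangle$, noting that $M$ is finitely generated (because $G_\Z\leq\UT(n,\Z)$ is a subgroup of a polycyclic group, a point the paper also uses) and of finite Pr\"{u}fer rank, and then applying Corollary~\ref{RobKey} twice inside $M$ to conclude that $L$ and $G_\Z$ both have finite index in $M$, hence are commensurable. The paper's route is shorter given the citation and produces the commensurating subgroup $L\cap G_\Z$ directly; yours is self-contained modulo the paper's own rank machinery, in effect re-deriving the one consequence of Segal's lemma that the argument actually needs. Both proofs rest on the same two pillars --- the isolator computation for $G_\Z$ in $G_\Q$ and Robinson's rank criterion converting equality of ranks into finiteness of index --- so the difference is precisely the intersection-versus-join mechanism, and both are sound.
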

\begin{proof}
By \cite[Lemma 6, \mbox{p.}~138]{Segal}, $H:= L\cap G_\Z$ has
finite index in $L$. Since $L$ is polyrational and nilpotent,
$\rk(H) = \rk(L)$ by Theorem~\ref{SolvableFAR}. Similarly (as
$G_\Z$ is finitely generated) $|G_\Z:H|< \allowbreak \infty$ if
and only if $\rk(G_\Z) = \rk(H)$. Also, it is not difficult to
verify that $G_\Q = I_{G_\Q}(G_\Z)$. Hence $\rk(G_\Q)=\rk(G_\Z)$
by Lemma~\ref{IsolatorImpliesRk}.
\end{proof}
\begin{remark}
By Lemma~\ref{UnipotentArithmeticity} and \cite[Corollary
7.2]{deGraaf}, if $L$ is arithmetic in $G$ then $\hi(L)$ is the
dimension of $G$ as an algebraic group.
\end{remark}
%Can't see that iff holds for de Graaf result.
%
%CHECK this remark.
%

\subsection{Pr\"{u}fer rank of a unipotent normal subgroup}

Let $G$ be a finitely generated SF subgroup of $\GL(n,\P)$.
%Although $U(G)$ need not be finitely generated, it contains
%finitely generated subgroups of the same rank.
%%
%(because the rank of $U(G)$ is bounded above by a function of $n$
%and $m$)}. NO: bddness in terms of n and m does not imply the
%statement: what implies the statement is that U(G) has finite rank
%at all. We can take U(G) \cap UT(n,Z): this is f.g., has the same
%rank as U(G). See SF.tex, Lemma 2.14.
%
%OF COURSE for *any* group G, if rk(G) is finite *then by the very
%definition* of Prufer rank, G must have a f.g. subgroup H that can
%be generated by rk(G) elements and no fewer; i.e., rk(G) = rk(H).
%
We show how to construct a finitely generated subgroup of $U(G)$
with the same Pr\"{u}fer rank as $U(G)$.
%Our description is modeled on \cite[\mbox{pp.}~56--57]{Kopytov2}.

%Recall: Tits Alternative applies in zero characteristic for non-f.g.
%too.

Suppose that $G = \gp{x_1, \ldots , x_r}$, and let $Y$ be a finite
subset of $U(G)$. The normal closure $N=\allowbreak \gp{Y}^G$ is
in $U(G)$. Define subgroups $H_1\leq H_2 \leq \cdots$ of $N$ as
follows: let $H_1 = \gp{Y}$, and for $i\geq \allowbreak 1$, if
$H_i = \gp{y_{i1},\ldots , y_{is_i}}$ then
\[
H_{i+1} = \gp{y_{ij}, y_{ij}^{x_k}, y_{ij}^{x_{k}^{-1}} : 1\leq j
\leq s_i, \, 1\leq k \leq r}.
\]
Since $\rk(H_i) \leq \rk(H_{i+1}) \leq \rk(N)$, there exists $t$
such that $\rk(H_t) = \rk(H_{t+1})$.

\begin{lemma}\label{RkUGfromNormalGens}
$\rk(H_t) = \rk(N)$.
\end{lemma}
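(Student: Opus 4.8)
The plan is to prove the stronger statement that $I_N(H_t)=N$ and then invoke Lemma~\ref{IsolatorImpliesRk}. First I record the ambient structure. Since $N\le U(G)$, after conjugating $G$ into block triangular form $N$ lies in $\UT(n,\P)$, so by Corollary~\ref{GinUTP} it---and every $H_i$---is polyrational; hence $\rk=\hi$ throughout by Proposition~\ref{RkeqHPolyrat}. Moreover $N$ is nilpotent, so the isolator $I_N(H)$ of any subgroup $H\le N$ is again a subgroup of $N$. This last (standard) fact is what lets me treat $I_N(H_t)$ as a genuine group.

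Next I convert the stabilization hypothesis into a finite-index statement. From $\rk(H_t)=\rk(H_{t+1})$ and polyrationality I get $\hi(H_t)=\hi(H_{t+1})$, and as $H_t\le H_{t+1}$ are finitely generated nilpotent (hence solvable FAR) groups, Theorem~\ref{SolvableFAR} gives $|H_{t+1}:H_t|<\infty$. A pigeonhole argument on the cosets $H_tx^i$ then shows every element of $H_{t+1}$ has a positive power in $H_t$; that is, $H_{t+1}\le I_N(H_t)$.

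The heart of the argument is to show that $I_N(H_t)$ is normalized by $G$, and here the definition of $H_{t+1}$ is decisive: it contains $H_t^{x_k}$ and $H_t^{x_k^{-1}}$ for every generator $x_k$, so $H_t^{x_k^{\pm 1}}\le H_{t+1}\le I_N(H_t)$. Using $N\unlhd G$, for $y\in I_N(H_t)$ with $y^a\in H_t$ one has $(y^{x_k})^a=(y^a)^{x_k}\in H_t^{x_k}\le H_{t+1}$, and some further power of this element again lies in $H_t$; hence $y^{x_k}\in I_N(H_t)$, and likewise $y^{x_k^{-1}}\in I_N(H_t)$. Thus $I_N(H_t)$ is invariant under conjugation by each $x_k^{\pm 1}$, hence by all of $G$. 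Being a $G$-invariant subgroup of $N$ that contains $Y$, it contains the normal closure $\gp{Y}^G=N$, so $I_N(H_t)=N$. Finally, since $N$ has finite torsion-free rank, Lemma~\ref{IsolatorImpliesRk} gives $\hi(N)=\hi(H_t)$, and polyrationality turns this into $\rk(N)=\rk(H_t)$, as required.

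I expect the main obstacle to be the normalization step: this is exactly where the normality of $N$ and the built-in closure of the $H_i$ under conjugation by the generators must combine, and where one must be careful that having a power landing in $H_{t+1}$ (rather than directly in $H_t$) still places the element in $I_N(H_t)$. The reductions to polyrationality and the finite-index computation should be routine given the earlier results.
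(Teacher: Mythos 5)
Your proof is correct and follows essentially the same route as the paper's: both show that $H_{t+1}$ lies in the isolator of $H_t$ (you via Theorem~\ref{SolvableFAR} plus a pigeonhole argument, the paper by citing Lemma~\ref{IsolatorAll}, which amounts to the same thing), both propagate this through conjugation by the generators using $H_t^{x_k^{\pm 1}}\leq H_{t+1}$ to conclude $I_N(H_t)=N$, and both finish with Lemma~\ref{IsolatorImpliesRk} and $\rk=\hi$ for polyrational groups. Your repackaging of the paper's induction on word length as ``$I_N(H_t)$ is a $G$-invariant subgroup'' is a clean equivalent formulation, and it has the merit of making explicit a fact the paper uses only implicitly, namely that the isolator of a subgroup of the nilpotent group $N$ is itself a subgroup.
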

\begin{proof}
By Lemma~\ref{IsolatorAll}, $I_{H_{t+1}}(H_t)=H_{t+1}$. So for
$1\leq i \leq r$ and $1\leq j \leq s_t$, there are positive
integers $m_{ij}, \bar{m}_{ij}$ such that
$(y_{tj}^{x_i})^{m_{ij}}$, $(y_{tj}^{x_i^{-1}})^{\bar{m}_{ij}} \in
H_t$. We claim that $y_{tj}^x\in I_G(H_t)$ for all $j$ and $x\in
\allowbreak G$. First,
\[
(y_{tj}^{x_vx_u^{\pm 1}})^{m_{vj}} =
((y_{tj}^{x_v})^{m_{vj}})^{x_u^{\pm 1}} \in H_{t+1}
\]
since $H_i^{x_k^{\pm 1}} \leq H_{i+1}$. Similarly
$(y_{tj}^{x_v^{-1}x_u^{\pm 1}})^{\bar{m}_{vj}}\in H_{t+1}$.
Induction on the word length of $x$ then
%N.B. Ensure no trivial cancellations! i.e. last term is not
%inverse of generator i.e. ...t . t^-1.
establishes that $y_{tj}^x\in I_G(H_t)$ as claimed. Hence $N
=H_1^G \leq H_t^G \subseteq I_G(H_t)$; i.e., $N =I_N(H_t)$. By
Lemma~\ref{IsolatorImpliesRk}, the proof is complete.
\end{proof}

\section{Computing ranks of solvable-by-finite
linear groups} \label{AlgSect}

Let $\mathcal{S}$ be a finite subset of $\GL(n,\P)$ where
$|\P:\Q|=m$, and let $G=\gp{\mathcal{S}}$. In this section we
present algorithms to compute $\hi(G)$ and a bound on $\rk(G)$.
These lead directly to an algorithm that tests whether a finitely
generated subgroup of $G$ has finite index.

Proposition~\ref{FgFRiffSF} allows us first to test whether $G$
has finite Pr\"{u}fer (and thereby torsion-free) rank: ${\tt
IsFiniteRank}(G)$ returns $\tt true$ precisely when the procedure
${\tt IsSolvableByFinite}(G)$ as in \cite[\mbox{p.}~402]{Tits}
returns $\tt true$. From now on, $G$ has finite rank.
%
%Point has been lost: given G we can improve standard rk bound,
%since we know more about particular G e.g. G could be completely
%reducible.
%

\subsection{Auxiliary procedures}

\subsubsection{}\label{CRedAbCase}
Suppose that $G$ is abelian and irreducible. Methods to construct
a presentation of $G$ are reasonably standard;
%, and for $\P=\Q$ have been implemented in \cite{Polenta};
see \cite[Chapter~4]{AssmannDiplom} for details. We can find the
homogeneous components of $G$ (e.g., by \cite{Ronyai}), so the
methods extend to completely reducible abelian $G$. For such input
we have procedures (i)~${\tt PresentationA}$, which returns a
presentation of $G$; and (ii)~${\tt RankA}$, which returns the
torsion-free rank of $G$. Then $\rk(G) = {\tt RankA}(G) +
\varepsilon$ where $\varepsilon = 0$ if $G$ is torsion-free and
$\varepsilon =\allowbreak 1 $ otherwise.

\subsubsection{}\label{UTCase}
If $G\leq \UT(n,\P)$ then $G$ is isomorphic to a subgroup of
$\UT(nm,\Z)$ \cite[Lemma~2, \mbox{p.}~111]{Segal}.
%N.B. UT(n,P) embeds in UT(nm,Q) as usual. We need f.g. to conjugate
%into UT(nm,Z).
Since $\UT(nm,\Z)$ is polycyclic, a constructive polycyclic
sequence for $G$ may be calculated as in \cite[Chapter 9]{Sims}
%; \mbox{cf.} \cite{Polenta} and
or \cite[Chapter 5]{AssmannDiplom}. From this one immediately
reads off ${\tt RankU}(G):= \allowbreak \hi(G) =\allowbreak
\rk(G)$.

%For completely reducible abelian over $\P$, computing
%pc-presentations and thus ranks is known: see, e.g.,.... We refer
%to these procedures as.... Similarly for subgroups of $\UT(n,\Q)$,
%methods to compute rank via pc-presentations are known: see, e.g.,
%.... We generalize to unipotent over $\P$ *without blowing up*.

%Split next section into two: reduction to completely reducible
%case; then completely reducible case (computing Hirsch number, and
%presentation).

\subsection{Completely reducible groups}\label{CRedCase}

If $G$ is completely reducible then $G_\rho$ is completely
reducible abelian and $\hi(G) = \allowbreak \hi(G_\rho)$. Thus
${\tt RankCR}(G):=\hi(G) = {\tt RankA}(G_\rho)$ as per
\ref{CRedAbCase}.

Now let $\F$ be arbitrary and $G\leq \GL(n,\F)$ be finitely
generated SF. In \cite[Section~4]{Tits} we show how to test
whether $G$ is completely reducible. Here we describe a more
general procedure.
%, that constructs a completely reducible part of $G$.

We refer to \cite[Section~3.2]{Tits}.
%and the paragraph after \cite[Lemma 2.4]{Large}.
The computations carried out in a run of ${\tt
IsSolvableByFinite}(G)$ yield a change of basis matrix $x$ such
that $G^x$ is block upper triangular and all diagonal blocks of
$G_\rho^x$ are abelian. Treating each diagonal block of $G^x$
separately, assume that $G_\rho$ is abelian. Let $M= \allowbreak
\{ h_1, \ldots , h_t\} = \allowbreak {\tt
NormalGenerators}(G_\rho)$; i.e., $G_\rho = \langle M \rangle^G$.
With a subscript `$u$' denoting unipotent part from a Jordan
decomposition, $H = \langle (h_1)_u, \ldots ,
(h_t)_u\rangle=\allowbreak  \langle M \rangle_u\leq (G_\rho)_u$.
Set $U=\allowbreak \mathrm{Fix}((G_\rho)_u)$ and $W= \allowbreak
\mathrm{Fix}(H)$. Since $G$ normalizes $(G_\rho)_u$, we see that
$U$ is a $G$-module. We find $U$ as follows.

\vspace*{7.5pt}

\begin{enumerate}

\item $\bar{W}:= W$.

\item While $\exists$ $g_i \in \mathcal{S}$ such that
$g_i\bar{W}\neq \bar{W}$

    \ \ \ $\bar{W}:= g_i\bar{W}\cap \bar{W}$.

\item Return $\bar{W}$.

\end{enumerate}

\vspace*{10pt}

\noindent Clearly $U\subseteq \bar{W}$. Let $v\in \bar{W}$ and
$g\in G$; then $(h_i)_u^gv = g^{-1}(h_i)_u. gv = g^{-1}gv$
(because $gv\in \bar{W} \subseteq \allowbreak W)=v$. This shows
that $\bar{W} = U$. By \cite[Theorem~5, \mbox{p.} 172]{Supr}, $U$
is completely reducible as a $G_\rho$-module. Therefore, if
$\mathrm{char}\, \F$ does not divide $|G:G_\rho|$, then $U$ is a
completely reducible $G$-module by \cite[Theorem~1,
\mbox{p.}~122]{Supr}. Repeat the previous computation after
replacing the current underlying space $V$ for $G$ by $V/U$.
Continuing in this fashion, we eventually produce a flag
$V=V_1\supset V_2 \supset \allowbreak  \cdots \supset V_l\supset
\{ 0\}$ of $G$-modules with each quotient $V_i/V_{i+1}$ completely
reducible.

We adopt the following notation in our pseudocode. For a matrix
group $H$ in block upper triangular form, $\mu$ denotes the
projection of $H$ onto its block diagonal, and $\mu_i$ is the
projection onto its $i$th diagonal block. When all diagonal blocks
are completely reducible, $\ker \mu = U(H)$ and $\mu(H)$ is a
`completely reducible part' of $H$.

\vspace*{12.5pt}

${\tt CompletelyReduciblePart}(G)$

\vspace*{1mm}

Input: a finite subset $\mathcal{S}$ of $\GL(n,\F)$ such that
$\mathrm{char}\, \F$ does not divide $|G:G_\rho|$ and $G=\langle
\mathcal{S}\rangle$ is SF.

Output: a generating set for a completely reducible part of $G$.

\vspace*{1mm}

\begin{enumerate}

\item \label{CRPartFirstStep} Replace $G$ by $G^x$ in block upper
triangular form with $k$ diagonal blocks, where $\mu(G_\rho^x)$ is
abelian.
%via ${\tt IsSolvableByFinite}(G)$.

\item $M:= {\tt NormalGenerators} (G_\rho)$.

\item For $i = 1$ to $k$, determine $x_i$ such that
$\mu_i(G)^{x_i}$ is block upper triangular with completely
reducible diagonal blocks, by the recursive calculation of fixed
point spaces for $\langle \mu_i(M)\rangle_u$.

\item \label{CRPartLastStep} Return $\mu(\mathcal{S}^y)$ where
$y=x \cdot \mathrm{diag}(x_1, \ldots , x_k)$.
\end{enumerate}

\vspace*{7.5pt}

\begin{remark}
If $G$ is nilpotent-by-finite then we can take $k=1$, $\mu_1=
\mathrm{id}$, and omit Step (\ref{CRPartFirstStep}).
\end{remark}

We need one other procedure for completely reducible $G\leq
\GL(n,\P)$: ${\tt PresentationCR}(G)$ returns a presentation of
$G$. This combines a presentation of $\varphi_\rho(G)$, computed
using the machinery of \cite{CT}, with ${\tt
PresentationA}(G_\rho)$.
% (\mbox{cf.} \cite{Tits,Recog}).

%a a a delete one!

\subsection{The unipotent radical}\label{UnipotentRadical}

Our next procedure is based on Lemma~\ref{RkUGfromNormalGens} and
its proof.

\vspace*{12.5pt}

${\tt RankOfUnipotentRadical}(G)$

\vspace*{1mm}

Input: a finite subset $\mathcal{S}=\{g_1,\ldots, g_r\}$ of
$\GL(n,\P)$ such that $G = \langle \mathcal{S}\rangle$ is SF.

Output: $\hi(U(G))=\rk(U(G))$.

\vspace*{1mm}

\begin{enumerate}

\item $\tilde{G}:=\langle {\tt
CompletelyReduciblePart}(G)\rangle$.

\item \label{URStep2} Find $X:= {\tt NormalGenerators}(U(G))$ from
${\tt PresentationCR} (\tilde{G})$.

\item  \label{URPenultimateStep} While ${\tt RankU}(\gp{x,
x^{g_i}, x^{g_{i}^{-1}} : x \in X , 1\leq i \leq r}) > {\tt
RankU}(\langle X\rangle)$ do

\ \  $X:= \{x, x^{g_i}, x^{g_{i}^{-1}} : x \in X , 1\leq i \leq
r\}$.

\item Return ${\tt RankU}(\langle X\rangle)$.
\end{enumerate}

\vspace*{5.5pt}

\begin{remark}
%Step~(\ref{URStep2}) employs $y$ from Step~(\ref{CRPartLastStep})
%of ${\tt CompletelyReduciblePart}(G)$.
%AND in step 1? Is this comment anachronistic? We have U(G) once
%${\tt CompletelyReduciblePart}(G)$ is over; noted in sentence just
%before pseudocode for ${\tt CompletelyReduciblePart}(G)$.
The finitely generated subgroup $H=\langle X\rangle$ of $U(G)$
such that $\rk(H) = \allowbreak \rk(U(G))$ found at the end of
Step~(\ref{URPenultimateStep}) could be valuable in further
computations with $G$.
%Delete this comment altogether?
\end{remark}

\subsection{Algorithms for computing ranks, and
an application}\label{ComputRksAppl}

Guided by Remark~\ref{RkHBreakdown}, we assemble our constituent
procedures into the final algorithms.

\vspace*{12.5pt}

${\tt HirschNumber}(G)$

\vspace*{1mm}

Input: a finite subset $\mathcal{S}$ of $\GL(n,\P)$ such that $G =
\langle \mathcal{S}\rangle$ is SF.

Output: $\hi(G)$.

%\vspace*{1mm}

\begin{enumerate}
\item[] Return ${\tt RankCR}(\langle {\tt
CompletelyReduciblePart}(G)\rangle)+{\tt
RankUnipotentRadical}(G)$.

\end{enumerate}

\vspace*{10pt}

Then ${\tt RankBound}(G):={\tt HirschNumber}(G)+\rk(\GL(nm,3))$ is
an upper bound on the Pr\"{u}fer rank of $G$ (see
Remark~\ref{InPFCase}).
%But if e.g. $G$ is c.red. abelian then we can find $\rk(G)$ exactly.
%Also if we can compute rk of cong. image we get better bound.

Corollary~\ref{RobKey} gives us the following.

\vspace*{12.5pt}

${\tt IsOfFiniteIndex}(G,H)$

\vspace*{1mm}

Input: finite subsets $\mathcal{S}_1$, $\mathcal{S}_2$ of
$\GL(n,\P)$ such that $G = \langle \mathcal{S}_1\rangle$ is SF and
$H= \allowbreak \langle \mathcal{S}_2\rangle \leq G$.

Output: ${\tt true}$ if $|G:H|$ is finite; ${\tt false}$
otherwise.

%\vspace*{1mm}

\begin{enumerate}
\item[] Return ${\tt true}$ if ${\tt HirschNumber}(G) = {\tt
HirschNumber}(H)$; else return ${\tt false}$.

\end{enumerate}

\vspace*{5pt}

\subsection{The implementation}\label{ExpResults}

We have implemented our algorithms as part of the {\sc Magma}
package {\sc Infinite} \cite{Infinite}. An algorithm of Biasse and
Fieker \cite{Fieker} %and its {\sc Magma} implementation are
is used to work with irreducible abelian groups over number
fields.

We report on several examples below (these will be available in a
future release of {\sc Infinite}). Our experiments were performed
on a 2GHz machine using {\sc Magma} V2.19-6. The test groups are
conjugated to ensure that generators are not sparse and matrix
entries are large. Each time has been averaged over three runs. As
observed in \cite{Tits, Recog}, the single most expensive task is
evaluating relators to obtain normal generators for the congruence
subgroup.

\begin{enumerate}

\item $G_{1}$ is an irreducible non-abelian subgroup of
$\GL(2,\Q(i))$, $i= \sqrt{-1}$, and $G_2 \leq \GL(5, \Q)$ is a
solvable group from the database of maximal finite rational matrix
groups~\cite{NP95}. Then $G_{3}= \allowbreak G_{1} \otimes G_2$ is
a $5$-generator AF completely reducible subgroup of
$\GL(10,\Q(i))$. We compute $\hi(G_3)=3$ in $10$s.

\item $G_4 \leq G_3\otimes \UT(3,\Z)$ is a $15$-generator,
nilpotent-by-finite (NF), reducible but not completely reducible
subgroup of $\GL(30,\Q(i))$. We compute $\hi(G_4)=6$ in $87$s.

\item $G_5 \leq H \otimes \ T$ where $T$ is an upper triangular
subgroup of $\GL(6,\Q)$ and $H =\mathrm{diag}(H_1, H_2)$; $H_1$,
$H_2$ are maximal finite rational matrix groups of degrees $4$,
$2$ respectively. The $8$-generator group $G_5$ is SF and not NF.
We compute $\hi(G_5)= 7$ in $1104$s, and establish that a random
$4$-generator subgroup has infinite index in $163$s.

\item Let $a \in \GL(6,\Q)$ be of the form $\mathrm{diag}(1, 2,
\ldots)$ and let $b = {\tiny \left( \!
\renewcommand{\arraycolsep}{.125cm} \begin{array}{cc} x& y \\
0 & u \end{array} \! \right)}$ where $x = \allowbreak {\tiny
\left( \!
\renewcommand{\arraycolsep}{.125cm} \begin{array}{cc} 1& 1 \\
0 & 1 \end{array} \! \right)}$, $y$ is a non-zero $2\times 4$
matrix over $\Q$, and $u\in \UT(4, \Z)$. Then $G_{6}\leq
\allowbreak \GL(6, \Q(\sqrt{5}\, ))$ is conjugate to a group
generated by $a$, $b$, another diagonal matrix and two other
unipotent matrices in $\GL(6,\Q)$. Note that $G_6$ is SF but not
PF. We compute $\hi(G_6)=12$ in $18$s.

\item For each of $G_3$, $G_4$, $G_6$ we select random finitely
generated non-cyclic subgroups $\hat{G}_j$. To establish that
$\hat{G}_j$ has finite index in $G_j$ takes $4$s, $53$s, and $17$s
respectively.

\end{enumerate}

%`Random' means `user writes down short words in the generators at
%will'.

%establish establish establish is is is

\subsubsection*{Acknowledgments}
Detinko and Flannery were supported by Science Foundation Ireland
grants 08/\allowbreak RFP/\allowbreak MTH1331 and
11/RFT.1/\allowbreak MTH/\allowbreak 3212-STTF11. O'Brien was
supported by the Marsden Fund of New Zealand grant UOA 1015. We
are very grateful to D.~J.~S.~Robinson for
%a very fruitful correspondence, and for
allowing us to include his proof of Theorem~\ref{SolvableFAR} in
the paper.

\bibliographystyle{amsplain}

\end{document}